\theoremstyle{plain}
\newtheorem{theorem}{Theorem}
\newtheorem{corollary}[theorem]{Corollary}
\newtheorem{lemma}[theorem]{Lemma}
\newtheorem{remark}[theorem]{Remark}
\theoremstyle{definition}
\newtheorem{assumption}[theorem]{Assumption}
\newcommand{\R}{\mathbf{R}}
\newcommand{\oo}{\mathcal{O}}
\newcommand{\xs}{x^*}
\newcommand{\kk}{^{(k)}}
\newcommand{\kz}{^{(0)}}
\newcommand{\ko}{^{(1)}}
\newcommand{\kt}{^{(2)}}
\newcommand{\kit}{^{(k)}}
\newcommand{\kpo}{^{(k+1)}}
\newcommand{\kmo}{^{(k-1)}}
\newcommand{\eqdef}{:=}
\newcommand{\mat}[1]{\begin{bmatrix}#1\end{bmatrix}}
\title{Gradient Descent and the Power Method:\\ Exploiting their connection to find the leftmost eigen-pair and escape saddle points}
\author{Rachael Tappenden and Martin Tak\'a\v{c}}
\begin{document}

\maketitle

\begin{abstract}
This work shows that applying Gradient Descent (GD) with a fixed step size to minimize a (possibly nonconvex) quadratic function is equivalent to running the Power Method (PM) on the gradients. The connection between GD with a fixed step size and the PM, both with and without fixed momentum, is thus established. Consequently, valuable eigen-information is available via GD.

Recent examples show that GD with a fixed step size, applied to locally quadratic nonconvex functions, can take exponential time to escape saddle points \cite{Du2017,Paternain2019}. Here, those examples are revisited and it is shown that eigenvalue information was missing, so that the examples may not provide a complete picture of the potential practical behaviour of GD. Thus, ongoing investigation of the behaviour of GD on nonconvex functions, possibly with an \emph{adaptive} or \emph{variable} step size, is warranted.

It is shown that, in the special case of a quadratic in $\R^2$, if an eigenvalue is known, then GD with a fixed step size will converge in two iterations, and a complete eigen-decomposition is available. 

By considering the dynamics of the gradients and iterates, new step size strategies are proposed to improve the practical performance of GD. Several numerical examples are presented, which demonstrate the advantages of exploiting the GD--PM connection.

  \end{abstract}

\section{Introduction}

This work concentrates on quadratic optimization problems of the form
\begin{equation}\label{probmin}
  \min_{x \in \R^n} f(x) \quad \text{where}\quad f(x) = \tfrac12 x^TAx - b^Tx,
\end{equation}
where $A\in \R^{n\times n}$ is symmetric, and $x,b \in \R^n$. If $A$ is Positive Definite (PD), then \eqref{probmin} is a strongly convex optimization problem. However, if $A$ is indefinite then \eqref{probmin} is nonconvex. Quadratic optimization problems arise in a plethora of situations, either directly, or perhaps as a subproblem within an iterative method. For example, many machine learning algorithms require a quadratic model to be minimized to determine an appropriate search direction within a trust region algorithm \cite{Nocedal06,Erway2020}, or within a Newton CG algorithm \cite{Nocedal06,Royer2020}. Moreover, nonconvex functions that are not quadratic, are often well approximated by a quadratic in the neighbourhood of a local minimizer or saddle point. Therefore, much can be learned by studying quadratic optimization problems.

One of the most well known algorithms in optimization is Gradient Descent (GD). Its popularity abounds because it is simple to use and understand, is widely applicable, and scales to high dimensional problems. Much can be said about the behaviour of GD on convex optimization problems, including characterisations of the convergence guarantees, and rates of convergence. However, it is also known to converge slowly in practice, particularly on ill-conditioned problems. 

Recently, partially motivated by the rise of applications in machine learning, attention has focused on the performance of GD on nonconvex problems. It is known that GD applied to nonconvex functions can find a stationary point in polynomial time \cite{Nesterov2004}, and that GD almost always escapes saddle points asymptotically \cite{Lee2016}, but there are other works \cite{Du2017,Paternain2019} that present nonpathalogical examples suggesting that GD can take exponential time to escape saddle points. Indeed, the work \cite{Du2017} asks the key question:
\begin{center}
`\emph{Does randomly initialized GD generally escape saddle points in polynomial time?}'
\end{center}
and they present examples demonstrating ``a strong \emph{negative} answer to this question'', \cite{Du2017}.

The current work revisits the examples presented in \cite{Du2017,Paternain2019} to show that, while the arguments in those works are sound (a \emph{fixed} step size is used for GD), important `free' eigenvalue information is available but not utilised, so further investigation of GD near saddle points is needed.

While studying the key question above, a connection between GD with a fixed step size and the PM, both with and without fixed momentum, is established. Hence, this connection shows that GD with a \emph{fixed} step size, applied to a (possibly nonconvex) quadratic function, can be adapted to provide an estimate of the leftmost eigenvalue-eigenvector pair of the Hessian at the cost of a couple of additional inner products. An algorithm called GD with EIGenvalue approximation, GD-EIG, is presented, which returns the minimizer of a quadratic function, and an approximate left eigen-pair. There are many potential uses for GD-EIG. For example, as an inner solver within an existing optimization algorithm,  the eigen information could be used to enrich a vanilla GD method, to directly find the leftmost eigen-pair of the Hessian, or as a stopping condition to classify a stationary point.

\subsection{Literature review}

Gradient Descent is ubiquitous in optimization and machine learning.  For convex optimization problems with a Lipschitz continuous gradient, GD converges at the (sublinear) rate $\oo(1/k)$, while for strongly convex problems, GD exhibits linear converge at the rate $1-1/\kappa$, where $\kappa$ denotes the condition number. These rates of convergence are suboptimal for strongly convex problems \cite{Nesterov2004}, and much effort has been made developing novel GD based algorithms that converge faster. 

One strategy that often helps improve the practical behaviour of GD is the heavy ball approach of Polyak \cite{Polyak1964}, where a `momentum' term is added at every iteration. While the iterates of GD often `zig-zag', momentum helps to damp this and pull the iterates along a `better' trajectory. While this approach can work well in practice, it can be difficult to tune the momentum parameter, and the algorithm is not guaranteed to converge in general.

Nesterov developed the concept of acceleration \cite{Nesterov1983,Nesterov2005,Nesterov2013}, and the accelerated gradient method is guaranteed to converge at the optimal linear rate $1-1/\sqrt{\kappa}$ for strongly convex functions. This algorithm works well in practice, but the strong convexity constant must be available because it is used in the algorithm. Many accelerated GD variants exist, including the following works \cite{Chen2017,Diakonikolas2019,Drusvyatskiy2018,Ghadimi2012,Jahani2021}, which also provide a certificate of optimality.

For strongly convex quadratic optimization problems, Conjugate Gradients (CG) \cite{Hestenes1952} is a successful algorithm, which is guaranteed to converge with rate $(\sqrt{\kappa}-1)/(\sqrt{\kappa}+1)$. Relevant to the current work is the connection between CG for solving problems of the form \eqref{probmin} when $A$ is positive definite, and the Lanczos method \cite{Lanczos1950} for finding the extremal eigenvalues (and eigenvectors) of $A$. In particular, if CG is applied to \eqref{probmin}, then approximations to the extremal eigenvalue(s) of $A$ can be recovered. Approximations to the corresponding eigenvectors are also available if the CG residual vectors are stored. (See also, for example, \cite[p.528]{Golub96}, \cite[p.307]{Demmel97}, \cite[p.221]{Saad2003} and \cite{Lanczos1952,Meurant2006}.)

Note that for CG, \eqref{probmin} must be strongly convex ($A$ must be PD). However, MINRES \cite{Paige1975} can be used even when $A$ is not PD, and the algorithm can also be used to generate eigen-information for $A$, i.e., there is a MINRES-Lanczos relationship. (See also \cite[p.494]{Golub96}.)

\subsection{Contributions}
The main contributions of this work are summarized below.
\begin{itemize}
  \item \emph{GD--PM Connection.} This work formalizes the connection between GD with a fixed step size and the Power Method (both with or without momentum). This relationship provides an analogue to the connection between CG and the Lanczos method.
  \item \emph{GD-EIG.} An algorithm called Gradient Descent with EIGenvalue Approximation (GD-EIG) is presented, which is simply GD with a fixed step size, combined with an explicit approximation of the leftmost eigenvalue of $A$. GD-EIG returns a solution to \eqref{probmin}, \emph{and} an estimate of the leftmost eigen-pair of $A$.
  \item \emph{Escaping saddle points.} Several examples from \cite{Du2017,Paternain2019}, examining the behaviour of GD with a fixed step size near saddle points, are revisited. It is shown that an approximation to the leftmost eigen-pair is implicitly available, and  strategies that use this information are suggested, providing potential ways of improving the practical performance of GD on these difficult nonconvex (`locally' quadratic) problems.
  \item \emph{Special case in $\R^2$.} Gradient descent with a fixed step size exhibits special behaviour in $\R^2$. Indeed, if the rightmost eigenvalue (the Lipschitz constant) is known, then a solution to \eqref{probmin}, and a full eigen-decomposition of $A$ is available in exactly 2 iterations.
  \item \emph{Eigen-enriched Gradient Descent.} New strategies are proposed to demonstrate how the eigen-information available via GD with a fixed step size might be used to improved the algorithm's behaviour. An algorithm called GD-EIG-Kick is presented, which is motivated by the GD-PM connection, and preliminary numerical experiments show that it has better practical performance compared with vanilla GD. 
\end{itemize}

\subsection{Preliminaries}
\label{Section_Preliminaries}
The following preliminaries and assumption are made throughout this work.
\begin{assumption}\label{ass:eigsA}
  The matrix $A\in \R^{n\times n}$ is symmetric. The eigen-pairs of $A$ are denoted by $(\lambda_i,v_i)$ for $i = 1,\dots,n$, with $\lambda_n \leq \cdots \leq  \lambda_1$, and $|\lambda_n|\leq |\lambda_1|$.
\end{assumption}

By the spectral theorem, the eigenvectors of $A$ form an orthonormal basis for $\R^n$. Thus, any vector $y\in \R^n$ can be represented in terms of this basis as $y = \sum_{i=1}^n (v_i^Ty)v_i.$
When $A$ is also nonsingular, the optimal solution to \eqref{probmin} is $x^* = \sum_{i=1}^n \tfrac1{\lambda_i}(v_i^Tb)v_i.$

The gradient of $f$ is 
\begin{equation}\label{gk}
  g(x) = A x - b. 
\end{equation}
(Usually the explicit dependence on $x$ is dropped, and the gradient is denoted by $g$).
The gradient of $f$ is $L$-Lipschitz continuous, i.e., for all $x,y\in\R^n$ it holds that
$\|Ax - Ay\|_2 \leq L \|x-y\|_2$ where $ L = \|A\|_2  = \lambda_1.$

Throughout this work, the convention $X^0 = I$, where $X \in \R^{n\times n}$ and $I$ denotes the identity matrix, is adopted (i.e., a matrix raised to the power zero is the identity matrix).

\subsection{Outline}

The paper is organised as follows. Section~\ref{S:GDandPM} presents Gradient Descent with momentum and the Power Method with momentum and establishes the connection between the two algorithms. Section~\ref{Section_Examples} revisits the examples in \cite{Du2017,Paternain2019} and shows that if eigenvalue information is taken into account, and the step size of GD is modified, then the practical behaviour of GD near saddle points may be better than previously suggested. The special case of GD in $\R^2$ is considered in  Section~\ref{Section_SpecialCaseR2}, and it is shown that convergence is possible in 2 iterations if the Lipschitz constant is known. Section~\ref{SectionEnrichedGD} considers practical aspects of GD given the established GD--PM connection. The dynamics of GD with a fixed step size are considered in Section~\ref{Section:GDdynamics}, and step size strategies are discussed in Section~\ref{Section_StepSize}. Section~\ref{Section_GDKick} describes the GD-EIG-Kick Algorithm, which uses the approximation to the leftmost eigenvalue to give GD a `kick' (a `long' step) towards the solution. Numerical experiments are presented in Section~\ref{Section_Numerical} to confirm that GD-EIG does recover an approximation to the leftmost eigen-pair in practice, and also to show the practical behaviour of GD-EIG-Kick. Concluding remarks are given in Section~\ref{Section_Conclusion}, and potential future research directions are suggested in Section~\ref{SectionNonconvexGDEIG}.

\section{Gradient Descent and the Power Method}\label{S:GDandPM}

In this section, Gradient Descent with momentum, and the Power Method with momentum, are discussed, and the connection between the two algorithms is formalized.

\subsection{Gradient Descent with Momentum}

Gradient Descent with Momentum (GDM), with a fixed step size $\alpha$ and fixed momentum term $\beta$, is presented in Algorithm~\ref{alg:GDM}. GDM is characterized by the iteration, for $k\geq 0$:
\begin{eqnarray}\label{xkpoGDmomentum}
  x\kpo = x\kk - \alpha g\kk + \beta(x\kk - x\kmo),
\end{eqnarray}
where $g\kk \eqdef g(x\kk)$.
In words, to generate the next iterate $x\kpo$, a step is taken from the current iterate $x\kk$ in the direction of the negative gradient of (fixed) size $\alpha$, and an adjustment term is added in the direction $x\kk-x\kmo$ scaled by (fixed) momentum parameter $\beta$. Setting $\beta = 0$ recovers standard GD with fixed step size.

\begin{algorithm}[ht!]
	\caption{Gradient Descent with Momentum (with fixed $\alpha$ and $\beta$)}
	\label{alg:GDM}
	\begin{algorithmic}[1]
		\STATE Initialization: Set $x\kz\in\R^n$, $x^{(-1)} = x\kz$, $\alpha\in (0,\tfrac1{\lambda_1}]$, $\beta\in[0,1]$.
		\FOR {$k=0,1,2,\dots$}
		\STATE  $g\kk = Ax\kk - b$
		\STATE  $x\kpo = x\kk - \alpha g\kk + \beta(x\kk - x\kmo)$
		\ENDFOR
	\end{algorithmic}
\end{algorithm}

It is convenient to define the following matrices:
\begin{equation}\label{H}
  H \eqdef (I-\alpha A),
\end{equation}
and
\begin{equation}\label{Hhat}
  \hat{H} \eqdef (1+\beta)I - \alpha A \overset{\eqref{H}}{=} H + \beta I.
\end{equation}
If $\beta = 0$ (no momentum) then $\hat{H} = H$. The following lemma shows how the gradient \eqref{gk} evolves when using GDM with fixed step-size $\alpha$, and fixed momentum parameter $\beta$.
\begin{lemma}\label{gradientHpower}
Let Assumption~\ref{ass:eigsA} hold, let $\alpha,\beta \geq 0$ be fixed, and let $H$ and $\hat{H}$ be defined in \eqref{H} and \eqref{Hhat}, respectively. Given an initial point $x\kz\in \R^n$, with $x^{(-1)} = x\kz$, at any iteration $k\geq 0$ of Gradient Descent with Momentum (Algorithm~\ref{alg:GDM}) applied to $f$ in \eqref{probmin},
  \begin{equation}\label{gradHm}
  g\kpo = \hat{H}g\kk - \beta g\kmo.
\end{equation}
\end{lemma}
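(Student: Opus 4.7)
The plan is to prove \eqref{gradHm} by direct computation: simply substitute the GDM update \eqref{xkpoGDmomentum} into the definition of the gradient \eqref{gk} evaluated at $x\kpo$, and then rewrite every $A x^{(j)}$ in terms of $g^{(j)}$.

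More concretely, I would start from $g\kpo = A x\kpo - b$ and use $x\kpo = x\kk - \alpha g\kk + \beta(x\kk - x\kmo)$ to obtain
\[
  g\kpo \;=\; (Ax\kk - b) - \alpha A g\kk + \beta A x\kk - \beta A x\kmo.
\]
The first term is already $g\kk$. For the last two terms, the trick is to add and subtract $\beta b$, so that $\beta A x\kk - \beta A x\kmo = \beta(Ax\kk - b) - \beta(Ax\kmo - b) = \beta g\kk - \beta g\kmo$. Collecting everything gives
\[
  g\kpo = (I - \alpha A)g\kk + \beta g\kk - \beta g\kmo = \bigl((1+\beta)I - \alpha A\bigr)g\kk - \beta g\kmo,
\]
and invoking the definition \eqref{Hhat} of $\hat{H}$ yields \eqref{gradHm}.

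There is no real obstacle here; the statement is essentially a one-line algebraic manipulation of the update rule, with the only mild subtlety being the symmetrization trick (adding and subtracting $\beta b$) that turns the $A x^{(j)}$ terms into gradient terms. The base case $k=0$ is consistent with the convention $x^{(-1)} = x\kz$, since then $g^{(-1)} = g\kz$ and \eqref{gradHm} reduces to $g\ko = Hg\kz$, which matches what one gets directly from $x\ko = x\kz - \alpha g\kz$. Thus the identity holds for all $k \geq 0$ without the need for a separate inductive argument.
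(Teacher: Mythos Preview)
Your proof is correct and follows essentially the same approach as the paper: substitute the GDM update into $g\kpo = Ax\kpo - b$, and rewrite $\beta A(x\kk - x\kmo)$ as $\beta(g\kk - g\kmo)$ (which is exactly your add-and-subtract-$\beta b$ trick) before collecting terms via the definition of $\hat H$. The only cosmetic difference is that the paper groups the terms slightly differently along the way, writing $Hg\kk + \beta(g\kk - g\kmo)$ before combining into $\hat H g\kk - \beta g\kmo$.
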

\begin{proof}
The gradient for \eqref{probmin} is
  \begin{eqnarray}\label{gradientevolution}
  g\kpo &\overset{\eqref{gk}}{=}& Ax\kpo - b\notag \\
  &\overset{\eqref{xkpoGDmomentum}}{=}& A\left(x\kk - \alpha g\kk + \beta(x\kk - x\kmo)\right) - b\notag\\
  &=& g\kk - \alpha A g\kk+ \beta A (x\kk - x\kmo) \notag\\
  &\overset{\eqref{H}}{=}& Hg\kk + \beta A(x\kk - x\kmo)\notag\\
  &\overset{\eqref{gk}}{=}& Hg\kk + \beta (g\kk - g\kmo)\notag\\
  &\overset{\eqref{Hhat}}{=}& \hat{H}g\kk - \beta g\kmo.
\end{eqnarray}
\end{proof}

\begin{corollary}\label{gradientevolutioncorollary}
Let the conditions of Lemma~\ref{gradientHpower} hold with $\beta = 0$. Then for Gradient Descent (Algorithm~\ref{alg:GDM} with $\beta = 0$) applied to $f$ in \eqref{probmin},
\begin{equation}\label{gradH}
  g\kpo = Hg\kk= H^kg\kz.
\end{equation}
\end{corollary}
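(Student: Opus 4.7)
The plan is to derive the corollary directly from Lemma~\ref{gradientHpower} by specializing $\beta=0$ and then iterating. First, I would invoke Lemma~\ref{gradientHpower}, noting that when $\beta=0$ the definition \eqref{Hhat} collapses to $\hat{H}=H$, and the three-term recurrence \eqref{gradHm} reduces to the one-step recurrence $g\kpo = H g\kk$, since the $-\beta g\kmo$ term vanishes. This establishes the first equality in \eqref{gradH} with essentially no work beyond restating the lemma under the simplified hypothesis.

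Next I would obtain the closed form by induction on $k$. The base case $k=0$ is immediate from what was just proved: $g\ko = H g\kz = H^{1} g\kz$, consistent with the convention $X^0 = I$ declared in Section~\ref{Section_Preliminaries}. For the inductive step, assume $g\kk = H^{k} g\kz$; then applying the one-step recurrence gives $g\kpo = H g\kk = H\cdot H^{k} g\kz = H^{k+1} g\kz$, which completes the induction. (I read the second equality in \eqref{gradH} as $g\kpo = H^{k+1} g\kz$; if the displayed exponent is intended literally as $H^{k}$, then the statement should be interpreted as $g\kk = H^{k} g\kz$, which is what the induction actually produces.)

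There is no genuine obstacle here: the result is a direct corollary once the momentum term is switched off, and the only subtlety is a bookkeeping one about matching the exponent of $H$ to the iteration index and respecting the $H^0 = I$ convention. I would therefore keep the proof to two or three lines, citing \eqref{gradHm} and \eqref{Hhat} at the specialization step and noting the induction in one sentence.
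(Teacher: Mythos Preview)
Your proposal is correct and matches the paper's treatment: the corollary is stated without proof, as an immediate specialization of Lemma~\ref{gradientHpower} with $\beta=0$, which is exactly what you do. Your parenthetical remark is also well taken---the displayed exponent in \eqref{gradH} is a minor slip and should read $H^{k+1}g\kz$ (equivalently $g\kk = H^k g\kz$), precisely as your induction yields.
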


\begin{remark}[Computing the matrix-vector product efficiently]\label{EfficientHv}
  Note that $\hat{H}$ is never formed (nor is $H$). Instead, matrix vector products $\hat{H}v$ are computed as $u \gets Av$, followed by $\hat{H}v \gets (1+\beta)v - \alpha u$; only an oracle returning matrix vector products with $A$ is needed, i.e., the process is `matrix free'.
\end{remark}

\subsection{The Power Method with Momentum}

The Power Method (PM) can be used to approximate the dominant eigenvalue and corresponding eigenvector of a matrix $M\in \R^{n\times n}$: $(\nu_1,u_1)$. A variant of the PM is developed in \cite{Xu2018}, that can lead to faster approximation of the eigen-pair. The method, characterised by update (A) in \cite{Xu2018} (henceforth referred to as the PM with Momentum (PMM)), includes a momentum term with fixed momentum parameter $\beta$. The PMM is presented in Algorithm~\ref{alg:PMM}; setting $\beta = 0$ recovers the PM.
\begin{algorithm}[h!]
	\caption{Power Method with Momentum (PMM) (Rayleigh quotient approx)}
	\label{alg:PMM}
	\begin{algorithmic}[1]
		\STATE Initialization: Set $w^{(-1)}\equiv 0$, $w\kz\in\R^n$, momentum parameter $\beta\in[0,1]$.
		\FOR {$k=0,1,2,\dots$}
		\STATE  $w = Mw\kk - \beta w\kmo$ \label{w}
        \STATE  $\nu_1\kpo = (w^Tw\kk)/\|w\kk\|_2^2$\label{nu}
        \STATE $w\kpo = w/\|w\|_2$\label{unitw}
		\ENDFOR
	\end{algorithmic}
\end{algorithm}

In Algorithm~\ref{alg:PMM}, the Rayleigh quotient is used to approximate $\nu_1$, the dominant eigenvalue of $M$  (Step~\ref{nu}, $\nu_1\kpo \approx \nu_1$). The corresponding dominant eigenvector is approximated in Step~\ref{w}, $u_1 \approx w$. Step~\ref{unitw} is a normalization step to ensure that the approximate eigenvector does not grow too large, which can lead to numerical instabilities.

Convergence results for the PM can be found in many good textbooks (see, for example, Theorem~27.1 in \cite{Trefethen97}, and results in \cite{Demmel97,Golub96}). It is important to note that convergence of the PM requires the initial vector $w\kz$ to contain a component in the direction of the dominant eigenvector, i.e., $u_1^T w\kz\neq 0$. Convergence of the PMM was established in \cite{Xu2018}, and it was shown that the optimal choice for the momentum parameter $\beta$, for matrix $M$, is $\beta = (\nu_2(M))^2/4$. Thus, the PMM can be difficult to tune because a good approximation to $\nu_2(M)$, the second largest eigenvalue of $M$, is often unknown a priori.

\subsection{The Connection between GDM and PMM}

In GDM, the gradients evolve as in \eqref{gradHm} (recall Lemma~\ref{gradientHpower}),  which is equivalent to the evolution of the eigenvector approximation in Step~3 of the PMM (Algorithm~\ref{alg:PMM}) with $M = \hat H$ and $w\kz = g\kz$. Thus, GD with a fixed step size (either with or without fixed momentum) is implicitly running the PM(M) on the gradients. The connection between GDM and the PMM is now established.

Because the Power Method (with momentum) is an algorithm for approximating the dominant eigenvalue and corresponding eigenvector of a matrix, the observation that the gradients are computed via a PM (with momentum) iteration confirms that the gradients in GD are aligning in the direction of the dominant eigenvector of $\hat{H}$. Subsequently, one has access to approximations to the dominant eigen-pair of $\hat{H}$. The following lemma describes the spectrum, and eigenvectors, of $\hat H$.

\begin{lemma}[Eigenvalues of $\hat H$]\label{evalsofHhat}
  Let Assumption~\ref{ass:eigsA} hold, choose fixed $\alpha \in (0,\tfrac1{\lambda_1}]$ and $\beta \in [0,1]$. The eigen-pairs of $\hat H$ in \eqref{Hhat}, are $(\nu_i,v_{n-i+1})$, where
  \begin{equation}
    \nu_i = 1+ \beta -\alpha \lambda_{n-i+1},  \quad\text{ for all } \; i = 1,\dots,n.
  \end{equation}
\end{lemma}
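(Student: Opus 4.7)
The plan is to verify the eigen-relation directly and then align the indexing with the ordering convention of Assumption~\ref{ass:eigsA}. First, I would use the spectral decomposition of $A$: for each eigen-pair $(\lambda_j, v_j)$ of $A$, simply apply $\hat H$ from \eqref{Hhat} to $v_j$ and compute
\[
\hat H v_j = \bigl((1+\beta)I - \alpha A\bigr) v_j = (1+\beta)v_j - \alpha \lambda_j v_j = \bigl(1+\beta - \alpha \lambda_j\bigr) v_j.
\]
This shows that every eigenvector of $A$ is also an eigenvector of $\hat H$, with eigenvalue shifted and scaled by the affine map $\lambda \mapsto 1+\beta-\alpha\lambda$.

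The second step is bookkeeping: the lemma indexes the eigen-pairs of $\hat H$ as $(\nu_i, v_{n-i+1})$ with $i=1,\dots,n$, so I would substitute $j = n-i+1$ into the computation above, obtaining $\nu_i = 1+\beta-\alpha\lambda_{n-i+1}$, matching the claim. Since $\alpha \geq 0$, the affine map is order-reversing, so $\lambda_n \leq \cdots \leq \lambda_1$ gives $\nu_1 \geq \cdots \geq \nu_n$; this justifies why the dominant eigenvalue of $\hat H$ (namely $\nu_1$) pairs with $v_n$, the eigenvector associated with the leftmost eigenvalue of $A$. This is the conceptually important point for the rest of the paper: the PM applied to $\hat H$ will latch onto the eigenvector of $A$'s \emph{smallest} eigenvalue.

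Since the $v_j$ are $n$ linearly independent eigenvectors of $\hat H \in \R^{n\times n}$, this exhausts the spectrum, and no further argument is needed. There is no real obstacle; the only subtlety is keeping the reversed indexing straight and noting that the assumption $\alpha \in (0, 1/\lambda_1]$ is not actually needed to identify the eigen-pairs — it is only used elsewhere to guarantee that $\nu_i \geq \beta \geq 0$, i.e.\ that $\hat H$ is positive semidefinite, which controls the behaviour of the implicit power iteration. I would mention this briefly but not belabor it, since the lemma as stated is a pure eigenvalue identification.
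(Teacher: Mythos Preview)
Your proposal is correct and follows essentially the same approach as the paper: apply $\hat H = (1+\beta)I - \alpha A$ to each eigenvector $v_j$ of $A$, read off the shifted eigenvalue $1+\beta-\alpha\lambda_j$, and then observe that the affine map $\lambda\mapsto 1+\beta-\alpha\lambda$ reverses the ordering, which accounts for the re-indexing $j=n-i+1$. The paper's proof is terser (it simply asserts the shift and the reversal), while you spell out the computation and add the useful remark that the hypothesis $\alpha\in(0,1/\lambda_1]$ is not needed for the eigen-identification itself; both are the same argument.
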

\begin{proof}
By Lemma~\ref{ass:eigsA}, the eigen-pairs of $A$ are $(\lambda_i,v_i)$, for $i=1,\dots,n$, where $\lambda_n\leq \cdots \leq \lambda_1$. Thus, the eigenvalues of $\hat H = (1+\beta)I-\alpha A$ are $\nu_1 = 1+\beta -\alpha \lambda_n \geq  \cdots \geq \nu_n = 1+\beta-\alpha \lambda_1$. Finally, note that the ordering of the eigenvectors is reversed, so the $i$th eigenvalue of $\hat H$, $\nu_i$, is associated with the $(n-i+1)$th eigenvector of $A$, $v_{n-i+1}$.
\end{proof}
Lemma~\ref{evalsofHhat} shows that the dominant eigen-pair of $\hat H$ is $(\nu_1,v_n)$, where $\nu_1 = 1+\beta-\alpha \lambda_n$, while the leftmost eigen-pair of $A$ is $(\lambda_n,v_n)$, where $\lambda_n = \tfrac1{\alpha} (1+\beta-\nu_1)$. That is, Lemmas~\ref{gradientHpower}~and~\ref{evalsofHhat} show that the gradients in GD with a fixed step size are converging in the direction of the leftmost eigenvector of $A$, and an approximation to $\lambda_n$ is also readily obtained. Therefore, valuable curvature information related to $A$ is implicitly available via Gradient Descent with Momentum, with a fixed step size. Note also that, if $\lambda_n \geq 0$, then the spectral radius of $\hat H$ in \eqref{Hhat} is $\rho(\hat{H}) = 1+ \beta -\alpha \lambda_n \leq 1 + \beta$.

To the best of our knowledge, this is the first work to formally connect GDM and the PMM. However, note that mathematical expressions similar to \eqref{gradH} can be found in many textbooks on linear algebra and optimization. For example, (3.4) in \cite{vanderVorst2003} gives an expression for the relationship between the residuals (i.e., negative gradients) for Richardson's method, which is similar to \eqref{gradH}. Similarly, see the error reduction formula \cite[p.24]{vanderVorst2003}. See also the development in Chapter 2 of \cite{Greenbaum1997}, starting from expression (2.1) used with $M=\lambda_1 I$.
Despite similar relationships appearing in several places, we have not seen it explicitly written that GDM with fixed step size and momentum parameter is equivalent to the PMM acting on the gradients. Possibly this is because expressions such as \eqref{gradH} are often made as a initial argument, providing motivation for more sophisticated methods, which are shown to have better convergence properties than GD. (For example, expressing the residual at iteration $k+1$ in terms of a polynomial multiplied by the residual at iteration $k$, and showing that minimizing the polynomial with respect to different norms generates the iterates of various algorithms.)

\subsection{GDM with Eigenvalue Approximation Algorithm}

GDM and the PMM can be combined to give Algorithm~\ref{alg:GDEIG}.
\begin{algorithm}
	\caption{Gradient Descent with Eigenvalue Approximation (GD-EIG)}
	\label{alg:GDEIG}
	\begin{algorithmic}[1]
		\STATE Initialization: $x\kz\in\R^n$, $x^{(-1)} = x\kz$, $g\kz=Ax\kz-b = g^{(-1)}$ $\alpha\in(0,\tfrac1{\lambda_1}]$, $\beta\in[0,1]$.
		\FOR {$k=0,1,2,\dots$}
        \STATE $x\kpo = x\kk - \alpha g\kk + \beta(x\kk - x\kmo)$
        \label{step:3}
        \STATE $u\kk = \hat{H} g\kk$
        \label{step:4}
        \STATE $\nu_1\kk = (g\kk)^T u\kk/\|g\kk\|_2^2$
        \label{step:5}
        \STATE $g\kpo = u\kk - \beta g\kmo$
        \label{step:6}
        \STATE $\lambda_n\kk = (1+\beta-\nu_1\kk )/\alpha$
        \label{step:7}
		\ENDFOR
	\end{algorithmic}
\end{algorithm}

Gradient Descent with EIGenvalue Approximation, GD-EIG, is simply GDM plus an approximation to the leftmost eigenvalue of $A$. The eigen-pair approximation, $(\lambda_n,v_n) \approx ((1+\beta-\nu_1\kk )/\alpha,g\kk)$, is essentially `free', costing only a couple of inner products. If for any $k\geq 0$, $\nu_1\kk=1$, then $\lambda_n=0$, and therefore it is revealed that $A$ is singular. If for any $k\geq 0$, $\nu_1\kk > 1$, then $\lambda_n<0$, and therefore $A$ is indefinite. GD-EIG involves one matrix vector product per iteration (recall Remark~\ref{EfficientHv}), which is the same cost as for GDM. 

Note that the PMM includes a normalisation step (Step 4 in Algorithm~\ref{alg:PMM}), while GD, and GD-EIG, do not. For the PMM, the eigenvector approximation grows proportionally to the dominant eigenvector, so normalisation is included to stop the eigenvector approximation from growing too large, which could lead to numerical issues. On the other hand, for gradient descent with an appropriate $\alpha$ and $\beta$, the norm of the gradient is decreasing as the iterates move toward the optimal solution, and scaling the gradient is not desirable. (The norm of the gradient is often used as a stopping condition.)

Because GD-EIG is simply GDM with an approximation to the dominant eigenvalue of $\hat{H}$, convergence follows directly from the convergence properties of GDM. The Rayleigh quotient is used (Step~\ref{step:5}) because (for $\beta = 0$) ``the Rayleigh quotient is a quadratically accurate estimate of an eigenvalue'' \cite[p.204]{Trefethen97}. 

\begin{remark}\label{gzcomponentvn}
Note that convergence of the PM requires the initial vector to contain a component in the direction of the dominant eigenvector. Thus, to allow $v_n$ to be recovered, GD-EIG should also be initialized so that $v_n^Tg\kz \neq 0$, i.e., $g\kz$ must contain a component in the direction $v_n$, corresponding to the leftmost eigenvector.
\end{remark}

For a symmetric PD matrix, applying the PM ($\beta = 0$) to the shifted matrix $A - \lambda_1 I$ allows recovery of the smallest eigenvalue of $A$, i.e., the dominant eigenvalue of  $A-\lambda_1 I$ is $|\lambda_n - \lambda_1|$. 
Clearly, the rate of convergence of the Power Method applied to the shifted matrix is $\left| \frac{\lambda_{n-1}-\lambda_1}{\lambda_{n}-\lambda_1}\right|^{2k}$. For PD $A$, this matches the rate of GD-EIG because for $H$ in \eqref{H}, the rate of convergence of the PM is
\begin{eqnarray}\label{pmrate}
  \left| \frac{\nu_2}{\nu_1}\right|^{2k} = \left| \frac{1-\frac{\lambda_{n-1}}{\lambda_1}}{1-\frac{\lambda_{n}}{\lambda_1}}\right|^{2k} =\left| \frac{\lambda_1-\lambda_{n-1}}{\lambda_1-\lambda_{n}}\right|^{2k}.
\end{eqnarray}
The ratio in \eqref{pmrate} is smaller (i.e., convergence is faster) when the gap between $\lambda_{n-1}$ and $\lambda_n$ is larger. For a computable error bound on the accuracy of the eigenvalue approximation given by the Power Method  (Algorithm~\ref{alg:PMM} with $\beta=0$), see Theorem~8.1.13 and the comment on p.408 in \cite{Golub96}.

The step size for GD-EIG depends on $\lambda_1$, while the algorithm gives an approximation to $\lambda_n$, and there are many potential applications where this information could be useful. In particular, this algorithm could be used to directly estimate the eigen-pair $(\lambda_n,v_n)$, or to provide an estimate of the condition number of $A$. Note that several iteration complexity results exist for GD which involve an estimate of the condition number, so GD-EIG could be used to compute an estimate of the number of iterations required for convergence on-the-fly. An estimate of the leftmost eigenvalue could also be used as a second order convergence test to determine whether a stationary point is a local minimizer. On the other hand, once a satisfactory approximation to $\lambda_n$ is obtained, if $\lambda_n>0$, then one could switch to (a warm started) accelerated gradient method.

Furthermore, there are many algorithms that require information about the leftmost eigen-pair, and so GD-EIG could be used as an inner solver (see, for example, discussion of the `hard case' in \cite[Section~2.3]{Erway2020}, \cite[p.87]{Nocedal06}). Other potential applications include solving a sequence of equations with a slowly changing right hand side, or as a means of running a deflation technique \cite{Bellavia2013}.

\section{Behaviour of GD near saddle points via examples}
\label{Section_Examples}

Several works including \cite{Du2017} and \cite{Paternain2019}, present examples to show that GD can take exponential time to escape saddle points. They argue that vanilla GD with a fixed step size may not be the best choice for nonconvex optimization problems, and that other algorithms may be preferable in such cases. Here we revisit the examples to show that eigen-information could be used to help.

\subsection{Example of \cite{Paternain2019}}
An illustrative example is given in \cite{Paternain2019} which shows why gradient descent can escape from saddle points, but does so slowly. That work presents a Nonconvex Newton (NCN) algorithm, and the example is used to highlight the superiority of their proposed method. Thus, consider the following function:
\begin{equation}\label{nonconvex}
  f_{\sigma}(x) = \tfrac12 x_1^2 - \tfrac{\sigma}2 x_2^2 = \tfrac12 x^TAx,
\end{equation}
where $A = {\rm diag}(1,-\sigma)$. Clearly, the eigenvalues of $A$ are $\lambda_1 = 1$ and $\lambda_2 = - \sigma$ (it is assumed that $\lambda_1>0>\lambda_2$, i.e., $\sigma>0$). Consider gradient descent applied to minimize the function \eqref{nonconvex} using a fixed step size of $\alpha = 1/\lambda_1 = 1$ (no momentum is added so that $\beta = 0$). Take an initial point $x\kz = (x_1\kz,x_2\kz)^T$, and corresponding gradient $g\kz = Ax\kz = (x_1\kz, -\sigma x_2\kz)^T$. Then, for all $k\geq 1$ the iterates and gradient evolve, respectively, as
\begin{eqnarray}\label{itevolution}
  x_1\kit = 0,\qquad x_2\kit = (1+\sigma)^k x_2\kz.
\end{eqnarray}
and
\begin{eqnarray}\label{gevolution}
g_1\kit = 0,\qquad g_2\kit = -\sigma (1+\sigma)^k x_2\kz,
\end{eqnarray}
As $\sigma \to 0$, (i.e., as the problem becomes increasingly ill-conditioned), it takes longer to escape the saddle point.
In particular, as noted in \cite{Paternain2019}, this implies that GD with a fixed step size will take exponential time to escape from the saddle point with the rate $(1+\sigma)$.

On the other hand, the iterates of NCN (their proposed method) evolve as
\begin{eqnarray*}
  x_1\kit = 0,\qquad x_2\kit = 2^k x_2\kz.
\end{eqnarray*}
So NCN takes exponential time to escape the saddle point, but at the better rate of $2$, which is \emph{independent of} $\sigma$, (i.e., the rate is independent of problem conditioning). 

This example clearly demonstrates the drawbacks of vanilla gradient descent with a fixed step size. However, delving a little deeper into this example, and bringing in the connection with the Power Method, established previously in this work (i.e., consider GD-EIG rather than GD), reveals the following. The approximation to the dominant eigenvalue of $H$, for $k\geq 1$ is,
\begin{eqnarray}\label{evalapprox}
  \nu_1\kit = \frac{(g\kit)^THg\kit}{(g\kit)^Tg\kit} = \frac{(g\kit)^Tg^{(k+1)}}{(g\kit)^Tg\kit} = \frac{\sigma^2(1+\sigma)^{2k+1}(x_2\kz)^2}{\sigma^2(1+\sigma)^{2k}(x_2\kz)^2} = 1+\sigma.
\end{eqnarray}
Thus, after precisely 2 iterations of GD, the dominant eigenvalue of $H$ is recovered. Importantly, $\nu_1\ko > 1$, so it is immediately recognisable that the leftmost eigenvalue of $A$ is negative, and it can be explicitly recovered as $\lambda_2^{(1)} = \lambda_1(1-\nu_1^{(1)}) = -\sigma.$
The error
$\delta = \|\nu_1^{(1)} g^{(1)} - g^{(2)}\|^2  \overset{\eqref{gevolution}+\eqref{evalapprox}}{=} 0$ confirms that the approximation is exact, as is the corresponding eigenvector, $g^{(2)}\equiv v_2$.

In summary, after exactly 2 iterations of GD-EIG with an fixed step length $\alpha = 1/\lambda_1$, the leftmost eigen-pair of $A$ is recovered, and it is known that the original problem is nonconvex because $\lambda_2<0$ (so there is a saddle point). With vanilla gradient descent, this valuable curvature information is wasted because it is never explicitly computed. \emph{Can this eigen information be used to enrich gradient descent?}
This question will be investigated further in Section~\ref{SectionEnrichedGD}, but let us try to gain some intuition to address this question now using this example in $\R^2$. Taking an initial step of size $1/\lambda_1$ immediately reduced the problem to a 1-dimensional subspace ($g_1\kk = 0$ in \eqref{gevolution}). The iterate and gradient dynamics in \eqref{itevolution} and \eqref{gevolution} show that the step length is `too short' for this problem. Instead, what should the next iterate be? Let $\alpha\kt$ denote the potential step length. Then
\begin{eqnarray*}
  x_2^{(3)} = x_2\kt - \tfrac1{\alpha\kt}g_2\kt = (1+\sigma)^2x_2\kt - \tfrac1{\alpha\kt}(-\sigma(1+\sigma)^2x_2\kt) = \left(1+\tfrac{\sigma}{\alpha\kt}\right)(1+\sigma)^2x_2\kz.
\end{eqnarray*}
Setting $\alpha\kt = \lambda_2 = -\sigma$ leads directly to the saddle point $(0,0)$. On the other hand, choosing $\alpha\kt = -\lambda_2 = \sigma$, gives $x_2^{(3)} = 2 (1+\sigma)^2 x_2\kt$. Because $\sigma >0$, $2 (1+\sigma)^2> 2$, so that the `escape rate' is  better using this eigen-information. Thus, using a first-order method, enriched with curvature information, the step size can be adapted so that the iterates escape the saddle point faster than vanilla GD.

\subsection{Special Case: Dynamics of GD in $\R^2$}
\label{Section_SpecialCaseR2}

Motivated by the previous discussion, $\R^2$ is a special case from which additional insight into GD can be gained. This is investigated now, before returning to discuss the examples in \cite{Du2017}. 

Consider a symmetric matrix $A\in \R^{2\times 2}$, which has the eigendecomposition $A = V\Lambda V^T$, with $V = (v_1,v_2)$ being orthogonal, and $\Lambda = {\rm diag}(\lambda_1,\lambda_2)$. Let us study the behaviour of GD on problem \eqref{probmin} with the stated $A$ and $b\in \R^2$. Given $x\kz$, 
\begin{eqnarray}\label{g0R2}
g\kz = Ax\kz -b = \lambda_1(v_1^Tx\kz)v_1 + \lambda_2(v_2^Tx\kz)v_2-b.
\end{eqnarray}
It is helpful to express the step-length $\alpha$ in terms of the largest eigenvalue $\lambda_1$ as $\alpha = 1/c\lambda_1$, where (possibly unknown) $c\geq 1$ is a constant. (The restriction $c\geq 1$ ensures that $\alpha \in (0,1/\lambda_1]$.)\footnote{If $\lambda_1$ is known, then the choice $c=1$ should be used. If $\lambda_1$ is unknown, usually some overapproximation of $\lambda_1$ is used, and it is helpful to describe the known overapproximation $c\lambda_1$ (a product), in terms of the \emph{unknowns} $c$ \emph{and} $\lambda_1$ .} Noting that any vector $y\in \R^n$ can be represented in terms of an orthogonal basis of eigenvectors as $y = \sum_{i=1}^n (v_i^Ty)v_i$, one has
\begin{eqnarray}\label{x1R2}
x^{(1)} &=& x\kz - \alpha g\kz\notag\\
&\overset{\eqref{g0R2}}{=}& (v_1^Tx\kz)v_1 + (v_2^Tx\kz)v_2 - \tfrac{1}{c\lambda_1}\left(\lambda_1(v_1^Tx\kz)v_1 + \lambda_2(v_2^Tx\kz)v_2-b\right)\notag\\
&=& (1 - \tfrac{1}{c})(v_1^Tx\kz)v_1 +(1- \tfrac{\lambda_2}{c\lambda_1})(v_2^Tx\kz)v_2  + \tfrac{1}{c\lambda_1}b
\end{eqnarray}
\paragraph{Case I: $c=1$ (i.e., $\lambda_1$ is known).} If $\lambda_1$ is known then the choice $c=1$ is made. The step-length is $\alpha = 1/\lambda_1$ and the first term in \eqref{x1R2} involving $v_1$ disappears, giving
\begin{eqnarray}\label{x1R2c1}
x^{(1)} =  (1- \tfrac{\lambda_2}{\lambda_1})(v_2^Tx\kz)v_2  + \tfrac{1}{\lambda_1}b.
\end{eqnarray}
Thus, $x^{(1)} - \tfrac{1}{\lambda_1}b $ lies in the 1-dimensional subspace of $\R^2$ spanned by $v_2$, i.e., $x^{(1)} - \tfrac{1}{\lambda_1}b $ and $v_2$ are co-linear. A unit eigenvector $v_2$ is recovered immediately as
\begin{eqnarray}\label{v2R2x1}
v_2 = (x^{(1)} - \tfrac{1}{\lambda_1}b) /\|x^{(1)} - \tfrac{1}{\lambda_1}b \|_2,
\end{eqnarray}
with corresponding eigenvalue $\lambda_2 = v_2^TAv_2/(v_2^Tv_2)$. If one possesses the eigenvector $v_1$, and if $\lambda_2 \neq 0$, then the stationary point is recovered as
\begin{eqnarray}\label{xsR2}
\xs = \tfrac1{\lambda_1}(v_1^Tb)v_1 + \tfrac1{\lambda_2}(v_2^Tb)v_2.
\end{eqnarray}
If not (i.e., if $v_1$ is unknown), then from \eqref{x1R2c1}, the gradient at $x^{(1)}$ is
\begin{eqnarray}\label{g1R2}
g^{(1)} &=&  Ax^{(1)} -b \notag\\ 
&=& (1- \tfrac{\lambda_2}{\lambda_1})(v_2^Tx\kz)Av_2  + \tfrac{1}{\lambda_1}Ab - b\notag\\
&= & \lambda_2(1- \tfrac{\lambda_2}{\lambda_1 })(v_2^Tx\kz)v_2 -Hb.
\end{eqnarray}
If $g^{(1)} + Hb = 0$ then from \eqref{g1R2}, it must hold that $\lambda_2 = 0$. Otherwise, one can also recover the (unit) eigenvector $v_2$ as
\begin{eqnarray}\label{v2R2viag1}
v_2 = (g^{(1)} + Hb)/\|g^{(1)} + Hb\|_2.
\end{eqnarray}
Assuming $\lambda_2 \neq 0$, taking a step of size $\alpha^{(2)} = 1/\lambda_2$ gives
\begin{eqnarray*}
x^{(2)} &=& x^{(1)} - \tfrac{1}{\lambda_2}g^{(1)}\\
&\overset{\eqref{x1R2c1}+\eqref{g1R2}}{=}& \tfrac{1}{\lambda_1}b + \tfrac{1}{\lambda_2}Hb\\
&=& \tfrac{1}{\lambda_1}\left((v_1^Tb)v_1 +(v_2^Tb)v_2 \right) + \tfrac{1}{\lambda_2}(1-\tfrac{\lambda_2}{\lambda_1})(v_2^Tb)v_2\\
&=& \tfrac{1}{\lambda_1}(v_1^Tb)v_1 + \tfrac{1}{\lambda_2}(v_2^Tb)v_2,
\end{eqnarray*}
which is the stationary point \eqref{xsR2}. 
Now $g^{(2)} = 0$, so the stationarity of $x^{(2)}$ is confirmed.

If the eigenvalues are both positive, then the stationary point is a minimizer, while if they have opposite signs it is a saddle point. Thus, in $\R^2$, if $\lambda_1$ is known, precisely 2 steps of a gradient based method (i.e., gradient descent with step-lengths $\alpha^{(1)}=1/\lambda_1$ and $\alpha^{(2)}=1/\lambda_2$) are required to find a stationary point and determine it's nature. Of course, if it is found using \eqref{v2R2x1} that $\lambda_2 <0$, then the information above can be used to escape the saddle point more quickly, rather that stepping directly toward it.

If $\lambda_2 = 0$ then $A$ is rank deficient and the saddle point is non-strict. Substituting $\lambda_2 = 0$ into \eqref{x1R2c1} gives
$x^{(1)} =  (v_2^Tx\kz)v_2  + \tfrac{1}{\lambda_1}((v_1^Tb)v_1 +(v_2^Tb)v_2 )$, because $b = (v_1^Tb)v_1 +(v_2^Tb)v_2 $.
Now
$g^{(1)} =  (v_2^Tb)v_2$.
Thus, if $b\in {\rm range}(A)$ then $v_2^Tb =0$ so that $g^{(1)} = 0$, which confirms that $x^{(1)}$ is a minimizer of $f$. On the other hand, if $b\not\in {\rm range}(A)$, $(v_2^Tb)v_2 \;\;(\neq 0)$ is the projection of $b$ onto $v_2$, so that again $x^{(1)}$ is a minimizer of $f$, and $\|g^{(1)}\|_2 = v_2^Tb$.

\paragraph{Case II: $c>1$ (i.e., $\lambda_1$ and $c$ are unknown).} If $c>1$, simply continuing with the step-length $\alpha=1/c\lambda_1\in(0,1/\lambda_1]$ shows that the dynamics of the gradient can be expressed as
\begin{eqnarray*}
g\kk = (1-\tfrac1c)^k(v_1^T(\lambda_1x\kz - b))v_1 + (1-\tfrac{\lambda_2}{c\lambda_1})^k(v_2^T(\lambda_2x\kz-b))v_2.
\end{eqnarray*}
The eigenvalue approximation is
\begin{eqnarray}\label{evalapprox}
  \nu_1\kk = \frac{(g\kit)^THg\kit}{(g\kit)^Tg\kit} = \frac{(1-\tfrac1c)^{2k+1}(v_1^T(\lambda_1x\kz - b))^2 + (1-\tfrac{\lambda_2}{c\lambda_1})^{2k+1}(v_2^T(\lambda_2x\kz-b))^2}{(1-\tfrac1c)^{2k}(v_1^T(\lambda_1x\kz - b))^2 + (1-\tfrac{\lambda_2}{c\lambda_1})^{2k}(v_2^T(\lambda_2x\kz-b))^2}
\end{eqnarray}

Because $(1-\tfrac1c)<(1-\tfrac{\lambda_2}{c\lambda_1})$, the first term on the numerator and denominator go to zero faster than the second term on the numerator and denominator. As soon as $\nu_1^{(K)}\approx \nu_1^{(K+1)}$ for some $K$, it is known that $\nu_1^{(K)} \approx (1-\tfrac{\lambda_2}{c\lambda_1})$ and the description from Case I can be applied to approximate $(\lambda_2,v_2)$, and either head toward, or escape from, the stationary point as appropriate.

\subsection{Example of \cite{Du2017}}

Equipped with the information from Section~\ref{Section_SpecialCaseR2} --- that GD will converge in exactly 2 iterations in $\R^2$ if available eigen-information is utilized --- we return to the examples in~\cite{Du2017}. 

The following example is taken directly from \cite{Du2017}. Consider a two-dimensional function $f$ with a strict saddle point at (0,0). Suppose that inside the neighbourhood $U=[-1,1]^2$ of the saddle point, the function is locally quadratic $f(x_1,x_2) = x_1^2 - x_2^2$. (Note that the eigenvalues are $\lambda_1 = 2$ and $\lambda_2 = -2$.) For GD with $\alpha = 1/4$ the coordinates are updated as
\begin{eqnarray*}
  x_1\kpo = \tfrac12 x_1\kk \quad \text{and} \quad x_2\kpo = \tfrac32 x_2\kk.
\end{eqnarray*}
The authors of \cite{Du2017} argue that if GD is initialized uniformly within the exponentially thin band $[-1,1] \times [-(\tfrac32)^{-\exp(1/\epsilon)},(\tfrac32)^{-\exp(1/\epsilon)}]$, (the width is $2(\tfrac32)^{- \exp (1/\epsilon)}$) then GD requires at least $\exp(1/\epsilon)$ iterations to get out of the neighbourhood $U$ and escape the saddle point.

The following comments can be made about this example. First, note that the step length is half what it could be to ensure convergence. That is, $\alpha = \tfrac14 = \tfrac1{2\lambda_1} = \tfrac1{c\lambda_1}$ with $c=2$, whereas the arguments in Section~\ref{Section_SpecialCaseR2} suggested to use the step size $\alpha =\tfrac1{\lambda_1} = \tfrac12$. It is immediately recognisable (for this very special case where the eigenvectors align in the coordinate directions) that the step is too short, because $x_1\ko \neq 0$, $g_1\ko \neq 0$, and because
\begin{eqnarray*}
  g_1\kpo = 2\cdot\left(\tfrac12\right)^k x_1\kz \quad \text{and} \quad g_2\kpo = 2\cdot\left(\tfrac32\right)^k x_2\kz.
\end{eqnarray*}
The eigenvalue approximation is
\begin{eqnarray*}
  \nu_1\kk = \frac{(g\kk)^Tg\kpo}{(g\kk)^Tg\kk} = \frac{(1/2)^{2k+1}(x_1\kz)^2 + (3/2)^{2k+1}(x_2\kz)^2}{(1/2)^{2k}(x_1\kz)^2 + (3/2)^{2k}(x_2\kz)^2}.
\end{eqnarray*}

Before presenting several numerical experiments, note that the suggested initialization band for the component $x_2\kz$ is so thin that for even large values of $\epsilon$, one has $x_2\kz = 0$. In particular, MATLAB reports that $x_2\kz = 0$ when $\epsilon = 0.1$ (and $x_2\kz = 10^{-27}$ when $\epsilon = 0.2$). The following examples were all initialized at $x\kz = (1,(\tfrac32)^{-\exp(1/\epsilon)})$ for either $\epsilon = 0.1$ (i.e., $x_2\kz = 0$) or $\epsilon = 0.5$ (i.e., $x_2\kz \neq 0$). Recall that $\lambda_2\kk = (1-\nu_1\kk)/\alpha$ and $\delta\kk = \|Ag\kk - \lambda_2\kk g\kk\|_2$.

\paragraph{Numerical experiment 1: $\epsilon = 0.1$.} A numerical experiment is performed on the stated example with $\epsilon = 0.1$, and GD-EIG uses a step size of $\alpha = 1/4$. The iterates are $x\kz = (1,0)^T$, and for $k=1$ GD-EIG gives  $x\ko = (0.5,0)^T$, $\nu_1\ko = 0.5$, $\lambda_2\ko = -2$ and $\delta\ko = 0$. Because $\delta\ko = 0$, it is known that GD-EIG has found an exact eigenvalue, and that $g\ko$ points in the direction of the corresponding eigenvector. Furthermore, it is clear that $\alpha$ is too short, and taking the step $x^{(2)} = x\ko - \tfrac1{\lambda_2} g\ko = (0,0)^T$, i.e., the saddle point has been reached. The saddle point \emph{cannot be escaped}, because there is no `escaping direction'. Recall Remark~\ref{gzcomponentvn}, which explains that this is because $v_2^Tg\kz = 0$. 

Because a stationary point is found, one could stop here. Alternately, should one wish to learn the nature of the stationary point, consider the following. Given that an eigen-pair is now known, choose a new random point, $\tilde x\kz$ say, (possibly close to the solution/origin) that is \emph{not co-linear} with the known eigenvector. Then running 2 iterations of GD-EIG from $\tilde x\kz$ with the step size $\alpha = |1/\lambda_2| = 1/2$ will give the complete information (see also numerical experiment 2).

\paragraph{Numerical experiment 2: $\epsilon = 0.5$.} Here, a numerical experiment is performed on the stated example, where $\epsilon = 0.5$, and where GD-EIG uses the `optimal' step size $\alpha = 1/\lambda_1 = 1/2$ (i.e., assume that $\lambda_1$ is \emph{known}). The iterates are: $x\kz = (1,0.05)^T$, $x\ko = (0,0.1)^T$ with $\delta\ko \neq 0$, and $x^{(2)} = (0,0.1999)^T$ with $\nu_1^{(2)} = 2 \;(>1)$, $\lambda_2^{(2)} = -2$, and $\delta\ko = 0$. Thus, after precisely 2 iterations of GD-EIG with $\alpha = 1/2$, both eigenvalues are known, and the eigenvector corresponding to the negative eigenvalue (i.e., an escaping direction) is known. Taking the step $\alpha^{(2)} = 1/\lambda_2 = -1/2$ leads directly to the saddle point.

\paragraph{Numerical experiment 3: $\epsilon = 0.5$.} Here a numerical experiment is performed on the stated example to illustrate the behaviour of GD, where $\epsilon = 0.5$. Running GD-EIG with $\alpha = 1/4$ gives the iterates reported in Table~\ref{tab:my_labelAA}.
\begin{table}[h!]
    \caption{Behaviour of GD-EIG on the above example with $\epsilon = 0.5$ and $\alpha = 1/4$.}
    \label{tab:my_labelAA}
    \centering
    \begin{tabular}{c|c|c|c|c|c}
       $k$      &  0 & 1 & 2 & 3 & 4\\
       \hline
       $x_1\kk$ & 1 & 0.5 & 0.25 & 0.125 & 0.0625\\
       $x_2\kk$ & 0.05 & 0.075 & 0.1125 & 0.1687 & 0.2531\\
       \hline
       $\nu_1\kk$ & --- & 0.5025 & 0.5220 & 0.6683 & 1.1456\\
       \hline
       $\lambda_2\kk$ & --- & 1.9900 & 1.9120 & 1.3267 & -0.5823\\
       \hline
       $\delta\kk$ & --- & 0.5984 & 0.8811 & 1.1350 & 0.7868\\
    \end{tabular}

\end{table}

Notice that after 4 iterations, the approximation $\nu_1^{(4)}>1$, so it is known that $A$ has a negative eigenvalue. Therefore, it is also known that $g^{(4)} \;(= (0.1250,-0.5061)^T) $ is a direction of negative curvature. (Note that $\delta^{(4)} \neq 0$, $\lambda_2\kk \neq \lambda_2$ and $g^{(4)} \neq v_2$.) Given this information, it is appropriate to consider whether to employ a different strategy, rather than continuing with GD-EIG under the given settings. Possible strategies include: (1) Taking a step of size $\alpha^{(4)} = -1/\lambda_2^{(4)}$. (This would give $x^{(5)} = x^{(5)} - \alpha^{(4)} g^{(4)} = (-0.1522,1.1221)^T$); (2) Performing a forward tracking line search along the negative curvature direction; or (3) Continuing with GD-EIG until $\delta\kk \approx 0$. (When $k=6$, $\lambda_2^{(6)} = -1.9731$ and $\delta^{(6)} = 0.1279$, while for $k=10$, $\lambda_2^{(10)} = -2.0000$ and $\delta^{(10)} = 0.0078$.)

\begin{remark}
The paper \cite{Du2017} also considers examples where GD is initialized `far away', and where multiple saddle points need to be escaped sequentially. The arguments above show that, so long as $v_2^Tg\kz \neq 0$, GD-EIG requires 2 iterations to find the stationary point and reveal its nature, i.e., it does not matter if $x\kz$ is `far away', but it does matter that $v_2^Tg\kz \neq 0$.
\end{remark}

\section{Practical considerations for GD}
\label{SectionEnrichedGD}

The previous sections established that GD with a fixed step size is connected to the PM acting on the gradients via a `shifted' matrix $\hat H$, so that valuable eigen-information is available when running GD on a quadratic optimization problem. This section investigates how to utilize this curvature information. The dynamics of the gradient are studied, the choice of step sizes is discussed, and then a new eigen-enriched GD variant, GD-Kick, is presented. Note that this section only considers the case $\beta = 0$ (no momentum). 

\subsection{Dynamics of GD}\label{Section:GDdynamics}

The dynamics of the gradient for GD with a fixed step size are presented now. Let $A\in\R^{n\times n}$ have the eigen-decomposition $A = V\Lambda V^T$, where $V = \mat{v_1 & \cdots &v_n}$ and $\Lambda = {\rm diag}(\lambda_1,\dots,\lambda_n)$. Because $A$ is symmetric (Assumption~\ref{ass:eigsA}), $V$ is orthogonal, and the eigenvectors form an orthonormal basis for $\R^n$. For simplicity, suppose that the eigenvalues are distinct and nonzero. The eigen-decomposition is $H$ is
\begin{eqnarray}\label{eigdecompH}
H = I-\alpha A = I -\alpha V\Lambda V^T = V(I -\alpha \Lambda )V^T =  V\Lambda_HV^T,
\end{eqnarray}
where $\Lambda_H = {\rm diag}(1-\alpha\lambda_1,\dots,1-\alpha\lambda_n)$ and $A$ and $H$ share the common eigenvectors given in $V$. Then, for any iteration $k\geq 0$, the gradient is
\begin{eqnarray}\label{gdynamicsnocycling}
  g\kk \overset{\eqref{gradH}}{=} H^k g\kz 
 \overset{\eqref{eigdecompH}}{=} V\Lambda_H^kV^T g\kz
   = \sum_{i=1}^n (1-\alpha \lambda_i)^k(v_i^Tg\kz)v_i.
\end{eqnarray}
The dynamics of the gradient \eqref{gdynamicsnocycling} are revealing. Suppose that $v_i^Tg\kz \neq 0$ $\forall i$, and suppose that $\lambda_n>0$ (i.e., let $A$ be PD).
Then, for $g\kk \to 0$  as $k\to \infty$, all the coefficients $(1-\alpha \lambda_i)^k$ must tend to zero. If $\alpha = 1/\lambda_1$, then $(1-\alpha \lambda_1)^1 = 0$, and  $(1-\alpha \lambda_i) = (1-\tfrac{\lambda_i}{\lambda_1})<1$  for $i=2,\dots,n$, so $(1-\alpha \lambda_i)^k \to 0$.
So, after 1 iteration of GD with the step length $\alpha = 1/\lambda_1$, the component of the gradient in the direction of $v_1$ is eradicated, and the gradient lies in the $n-1$-dimensional subspace spanned by the remaining eigenvectors $\{v_2,\dots,v_n\}$. The term $(1-\alpha\lambda_n)^k$ tends to zero the most slowly, confirming from a different viewpoint, that the gradient aligns in the direction of the leftmost eigenvector as $k\to \infty$.

Following this line of argument, if one knew the eigenvalues $\lambda_1,\dots,\lambda_n$, taking precisely $n$ steps of gradient descent, but with steplength $1/\lambda_k$ for iterations $k=1,\dots,n$, will lead to a zero gradient.\footnote{In fact, in exact arithmetic, the order of the step-lengths does not matter, only that exactly one step of size $1/\lambda_k$ for each $k=1,\dots,n$ is taken. However, for numerical reasons, the order $1/\lambda_1,\dots,1/\lambda_n$ is sensible.} That is, \eqref{gdynamicsnocycling} shows that once an eigen-component has been cancelled out from the gradient, it will never re-enter the gradient (in exact arithmetic) so that taking $n$-steps, each of which is in the direction of the negative gradient and with each step size being the reciprocal of each eigenvalue, will lead to the stationary point. (This is also true when $A$ is indefinite.)

If $A$ is indefinite, then, for all $\lambda_j < 0$, $(1-\tfrac{\lambda_j}{\lambda_1})> 1$, so that continuing with the fixed step size $\alpha = 1/\lambda_1$ will cause the coefficients in \eqref{gdynamicsnocycling} corresponding to negative eigenvalues to grow, and the iterates will escape the saddle point eventually.

\subsection{Choice of step length}\label{Section_StepSize}

It is now clear that for $k>1$, the step length $\alpha = 1/\lambda_1$ is pessimistic. 
Instead, assuming that $\lambda_1$ is known, consider the following  `smart' initialization strategy for GD with a fixed step size. Choose a random point $x^{(-1)}\in \R^n$ and then set the initial point to be $x\kz = x^{(-1)}- \tfrac{1}{\lambda_1} (Ax^{(-1)}-b)$, which ensures that $v_1^Tg\kz = 0$, i.e.,  the first component in the sum \eqref{gdynamicsnocycling} corresponding to the direction $v_1$ is eliminated.
Now, suppose that $\alpha = 2/\lambda_1$. Then, for all $\lambda_j>0$, $|1-\tfrac{2\lambda_j}{\lambda_1}|<1$ so that $|1-\tfrac{2\lambda_j}{\lambda_1}|^k \to 0$ as $k\to \infty$. Therefore, GD with the initialization above, and the fixed step size $\alpha = 2/\lambda_1$ is guaranteed to converge whenever $A$ is positive definite. Moreover, because a fixed step length is used, the connection with the PM still holds, and the gradient converges in the direction of $v_n$. This is summarized in the following theorem.
\begin{theorem}
Gradient descent with the fixed step size $\alpha = 2/\lambda_1$ is guaranteed to converge when applied to the problem \eqref{probmin}, when $A$ is positive definite and $x\kz = x^{(-1)}- \tfrac{1}{\lambda_1} (Ax^{(-1)}-b)$ for some $x^{(-1)}\in \R^n$.
\end{theorem}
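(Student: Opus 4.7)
The plan is to combine the gradient dynamics of Corollary \ref{gradientevolutioncorollary} with a direct calculation showing that the smart initialization annihilates the entire $\lambda_1$-eigenspace of $A$. This is exactly the subspace on which the step size $\alpha = 2/\lambda_1$ fails to produce a strict contraction, so once it is removed the remaining spectral components decay geometrically.

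First, I would compute $g\kz$ in terms of $g^{(-1)} \eqdef Ax^{(-1)} - b$. Substituting the definition of $x\kz$ into the gradient formula gives $g\kz = A x\kz - b = g^{(-1)} - \tfrac{1}{\lambda_1} A g^{(-1)} = H g^{(-1)}$, with $H = I - \tfrac{1}{\lambda_1} A$. Expanding $g^{(-1)}$ in an orthonormal eigenbasis $\{v_i\}$ of $A$ (available by Assumption \ref{ass:eigsA}) yields $g\kz = \sum_i (1 - \lambda_i/\lambda_1)(v_i^T g^{(-1)}) v_i$, so $v_i^T g\kz = 0$ for every $i$ with $\lambda_i = \lambda_1$. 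Hence $g\kz$ has no component along the top eigenspace of $A$, irrespective of the multiplicity of $\lambda_1$.

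Next, Corollary \ref{gradientevolutioncorollary} with $\beta = 0$ gives $g\kk = H^k g\kz$, which in the eigenbasis becomes (exactly as in \eqref{gdynamicsnocycling}) $g\kk = \sum_i (1 - \alpha \lambda_i)^k (v_i^T g\kz) v_i$. With $\alpha = 2/\lambda_1$, the multiplier in direction $v_i$ is $(1 - 2\lambda_i/\lambda_1)^k$. For indices with $\lambda_i = \lambda_1$ the multiplier has magnitude one, but the coefficient $v_i^T g\kz$ is zero by the first step, so these terms vanish identically. For the remaining indices, positive definiteness of $A$ forces $0 < \lambda_i < \lambda_1$, hence $|1 - 2\lambda_i/\lambda_1| < 1$, and each surviving term decays geometrically in $k$. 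The finite sum therefore gives $g\kk \to 0$. Since $A$ is invertible with unique minimizer $\xs = A^{-1} b$, the identity $x\kk - \xs = A^{-1} g\kk$ delivers the claimed convergence.

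The only place demanding care is the opening computation: one must notice that the smart initialization acts as a single application of $H$ on $g^{(-1)}$, which annihilates the \emph{entire} $\lambda_1$-eigenspace rather than merely one eigenvector, so the argument remains valid even when $\lambda_1$ is repeated. Everything after that is a direct geometric-decay argument in the eigenbasis.
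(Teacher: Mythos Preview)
Your proposal is correct and follows essentially the same route as the paper: the paper's argument (given in the paragraph immediately preceding the theorem) uses the smart initialization to kill the $v_1$-component of $g\kz$, then invokes the gradient dynamics \eqref{gdynamicsnocycling} and observes that the remaining spectral multipliers satisfy $|1-2\lambda_j/\lambda_1|<1$. Your version is in fact slightly more careful than the paper's, since you handle repeated top eigenvalues (the paper works under the standing simplification of distinct eigenvalues from Section~\ref{Section:GDdynamics}) and you make the passage from $g\kk\to 0$ to $x\kk\to\xs$ explicit via $x\kk-\xs = A^{-1}g\kk$; the only cosmetic wrinkle is that you use the symbol $H$ for two different matrices (once with $\alpha=1/\lambda_1$ for the initialization, once with $\alpha=2/\lambda_1$ for the iteration), which is worth disambiguating in a final write-up.
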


On the other hand, for all $\lambda_j<0$, $|1-\tfrac{2\lambda_j}{\lambda_1}|>1$ so these coefficients grow as $k\to \infty$. Moreover $|1-\tfrac{2\lambda_j}{\lambda_1}|>|1-\tfrac{\lambda_j}{\lambda_1}|$, so the more aggressive step size $\alpha = 2/\lambda_1$ leads to faster escape from the saddle point.

We remark that the step length $2/(\lambda_1 + \lambda_n)$ is known to be `optimal' for (non-accelerated) GD when $A$ is positive definite. However, this step size relies upon knowledge of $\lambda_n$, which is often unknown when the algorithm is initialized. The strategy proposed above does not require knowledge of $\lambda_n$. Note also that $2/\lambda_1$ is a larger step than $2/(\lambda_1 + \lambda_n)$.

\subsection{GD-EIG with a Kick}
\label{Section_GDKick}

An eigen-enriched GD algorithm called GD-EIG-Kick is presented now. The `base' algorithm is simply GD(-EIG) with a fixed step size, but every $s+1$ iterations, the algorithm attempts to take a longer step (a `kick'), where the step size is related to $\lambda_n\kk$, the current approximation to the leftmost eigenvalue. GD-EIG-Kick is presented as Algorithm~\ref{alg:GDKick}.

\begin{algorithm}
	\begin{algorithmic}[1]
        \STATE {\bf Input:}  Initial point $x^{(0,0)}$, $x^{(0,-1)}=x^{(0,0)}$, $\alpha\in (0,1/L]$, $\beta\in [0,1]$, iteration counter $s$.
		\FOR {$k = 0,1,2,\dots$}
		\FOR {$\ell = 1,2,\dots,s$}
        \STATE $x^{(k,\ell)} = x^{(k,\ell-1)} - \alpha g(x^{(k,\ell-1)} ) + \beta(x^{(k,\ell-1)}-x^{(k,\ell-2)})$\label{GDupdate}
        \ENDFOR
        \STATE Compute $g(x^{(k,s)})$
        \STATE $\nu_1^{(k)} = (g(x^{(k,s-1)})^Tg(x^{(k,s)}))/(g(x^{(k,s-1)})^Tg(x^{(k,s-1)}))$
        \STATE $\lambda_n^{(k)} = (1+ \beta -\nu_1^{(s)})/\alpha$
        \IF {$\lambda_n^{(k)} \neq 0$}\label{Step:safeguardsingular}
        \STATE $\tilde x = x^{(k,s)} - |1/\lambda_n^{(k)}| g(x^{(k,s)})$ \label{Step:safeguardnegative}
        \STATE $\hat x = x^{(k,s)} - \alpha g(x^{(k,s)})$ 
        \IF {$f(\tilde x)< f(\hat x)$}
        \STATE $ x^{(k+1,0)} = \tilde x$ 
        \ELSE
        \STATE $ x^{(k+1,0)} = \hat x$
        \ENDIF
        \ELSE
        \STATE $ x^{(k+1,0)} = x^{(k,s)} - \alpha g(x^{(k,s)})$
        \ENDIF
        \STATE $ x^{(k+1,-1)} = x^{(k+1,0)}$
		\ENDFOR
	\end{algorithmic}
	\caption{GD-EIG with a Kick}
	\label{alg:GDKick}
\end{algorithm}

The motivation for GD-EIG-Kick is as follows. Recall \eqref{gdynamicsnocycling} and the discussion in Section~\ref{Section:GDdynamics}. If $A$ is PD then $\lambda_1\geq \dots\geq \lambda_n>0$, and components of the gradient corresponding to larger eigenvalues go to zero faster than components corresponding to smaller eigenvalues. Running $s$ iterations of GD-EIG with $\alpha = 1/\lambda_1$ will eliminate the $v_1$ component, and will shrink the larger components, but if $\lambda_1\gg \lambda_n$, then the smaller components will remain almost unchanged. Using the Rayleigh quotient to give the approximation $\lambda_n\kk$, ensures that $1/\lambda_n\kk \geq 1/\lambda_1$, and so taking this larger step will hopefully be beneficial for shrinking the components of the gradient corresponding to smaller eigenvalues. However, this larger step can also increase components of the gradient corresponding to larger eigenvalues, so the step should only be accepted if it leads to a larger reduction in the function value than the fixed step $\alpha$, (but the norm of the gradient is allowed to increase). This `fixed step then kick' process then repeats. 

On the other hand, if $A$ is indefinite, then $\lambda_n\kk$ will still approximate the leftmost eigenvalue, but now the approximation could be negative. In Step~\ref{Step:safeguardnegative}, it is suggested to try to take the absolute value of the reciprocal of $\lambda_n\kk$ as the step size. As discussed previously, this will hopefully lead to faster evasion of the saddle point. Note that, if the goal was to locate a stationary point (for example, if one wished to solve $Ax=b$, rather than \eqref{probmin}), then dropping the absolute values and taking the step size $1/\lambda_n\kk$ gives a kick toward the saddle.

Two safeguards are included in GD-EIG-Kick. Firstly, if $A$ is singular, then it is possible that the estimate of the leftmost eigenvalue is zero, so Step~\ref{Step:safeguardsingular} is included to ensure that a `kick' step is only considered if $\lambda_n\kk \neq 0$. Secondly, assuming that $\lambda_n\kk \neq 0$, a longer `kick' step will only be accepted if it leads to a greater reduction in the function value than a corresponding fixed step size. Hence, the convergence and rate of GD-EIG-Kick is the same as for GD with a fixed step size.

GD-EIG-Kick makes use of the `freely available' eigen-information, so it is no more expensive than vanilla GD; the dominant cost is one matrix vector product per iteration. Moreover, the connection with the PM still holds, because we can view GD-EIG-Kick, simply as repeatedly restarted GD/GD-EIG. Thus, GD-EIG-Kick will still provide and approximation to $\lambda_n$, and the gradient still aligns in the direction $v_n$ as $k\to \infty$.

\begin{remark}
Although not explicitly written, it is allowed to use the fixed step $\alpha = 2/\lambda_1$ in Step~\ref{GDupdate} of GD-EIG-Kick (Algorithm~\ref{alg:GDKick}). However, in this case, one \emph{must} take a step of size $\alpha = 1/\lambda_1$ whenever $\ell = 1$ (i.e., every time the inner loop begins), and then for $2\leq \ell \leq s$, the step size $\alpha = 2/\lambda_1$ is allowed. Failure to take the first loop step of size $\alpha = 1/\lambda_1$ will cause the algorithm to diverge.
\end{remark}

\section{Numerical Experiments}
\label{Section_Numerical}

In this section, several experiments are presented to numerically verify the results described in this work. In particular, experiments are presented showing that (1) an approximation to the leftmost eigen-pair can be recovered in practice; (2) that the fixed step length $\alpha = 2/\lambda_1$, combined with the appropriate initialization, leads to favourable practical performance compared with the fixed step length $\alpha = 1/\lambda_1$; and (3) that GD-EIG-Kick has improved practical behaviour compared with vanilla GD with a fixed step size.\footnote{The codes to reproduce numerical experiments are available at \url{https://github.com/Optimization-and-Machine-Learning-Lab/gradient_descent_and_power_method.git}}

\subsection{Estimating the left eigen-pair}

The purpose of the first set of numerical experiments is to confirm numerically some of the main observations from the paper; specifically, that an estimate of the leftmost eigen-pair can be obtained in practice using GD-EIG and GD-EIG-Kick. 

\subsubsection{Approximation using GD-EIG}

\begin{figure}
    \centering

    \includegraphics[width=0.3\textwidth]{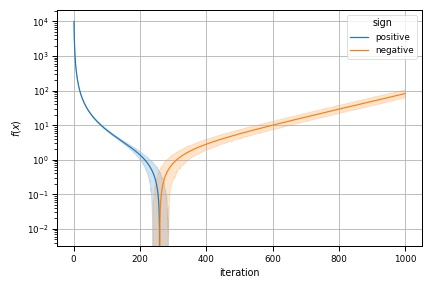}
    \includegraphics[width=0.3\textwidth]{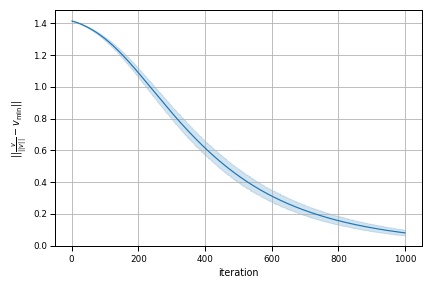}
    \includegraphics[width=0.3\textwidth]{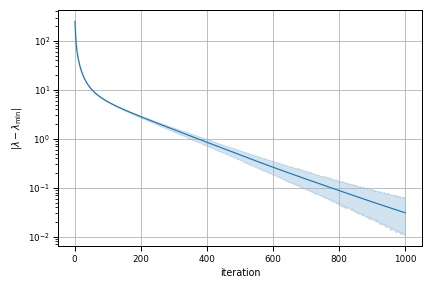}

    \caption{Figure showing the results of the first numerical experiment. The left plot shows the evolution of the function values, the middle plot shows the accuracy of the eigenvector approximation, and the right plot shows the accuracy of the eigenvalue estimate.}
    \label{fig:remark3}
\end{figure}
To this end, 100 random symmetric indefinite matrices $A \in \R^{200 \times 200}$ were generated, and in all cases $b = 0$, so that $f(x) = \tfrac12 x^TAx$. For each matrix, a random initial starting point $x\kz\in \R^{200}$ was generated, and GD-EIG, with a step size of $\alpha = 1/\lambda_1$, was run for 1,000 iterations.
Note that, because the matrix $A$ is indefinite, there is a saddle point, and
the optimization problem~\eqref{probmin} 
is unbounded from below.
Figure~\ref{fig:remark3} shows the results of this experiment.

The left plot in Figure~\ref{fig:remark3} shows the evolution of $f(x^{(k)})$ as the algorithm progresses. Note that, because a log-scale is used, positive and negative function values are differentiated by colour (blue corresponds to $f(x^{(k)})\geq 0$, while red corresponds to $f(x^{(k)}) < 0$). All 100 runs/problem instances are plotted in this figure, giving a shaded region, with the average over the 100 runs given by the darker line. This confirms that GD-EIG is solving the problem~\eqref{probmin}, as expected ($f(x\kk) \to -\infty$ as $k\to \infty$). Notice that the function value decreases rapidly initially, but then slows down as it nears the saddle point. After just over 200 iterations on average, a direction of negative curvature is found, the function value becomes negative and the iterates slowly escape the saddle point.

In the middle plot we show the accuracy of the left-most eigenvector estimate, and the right plot shows the left-most eigenvalue estimate. Recall that the Rayleigh quotient provides a quadratically accurate approximation to the eigenvalue, while the approximation to the eigenvector is only linear, which corresponds with what is shown numerically (i.e., it takes more iterations to generate an accurate approximation to the leftmost eigenvector, compared with the corresponding eigenvalue). This confirms that an estimate of the leftmost eigen-pair is recovered numerically. Recall \eqref{pmrate}, which gives the approximation rate for the leftmost eigenvalue, which is the same when using GD-EIG with a fixed step size, or using the PM. That is, we do not claim a faster rate of convergence using GD-EIG, but we do show that an approximation to the leftmost eigen-pair is available.

\subsubsection{Approximation using GD-EIG-Kick}

\begin{figure}
    \centering
    \includegraphics[width=0.3\textwidth]{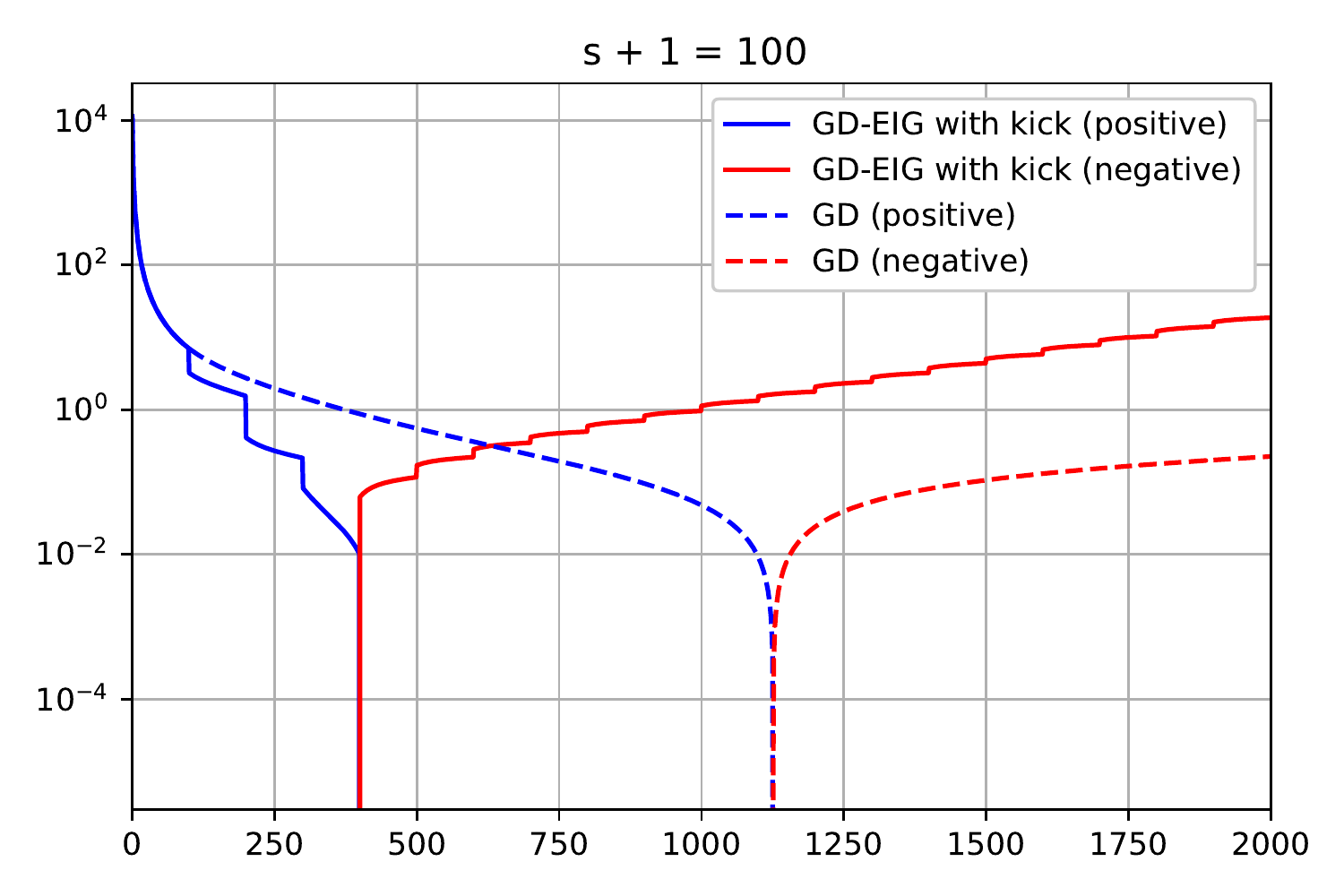}
    \includegraphics[width=0.3\textwidth]{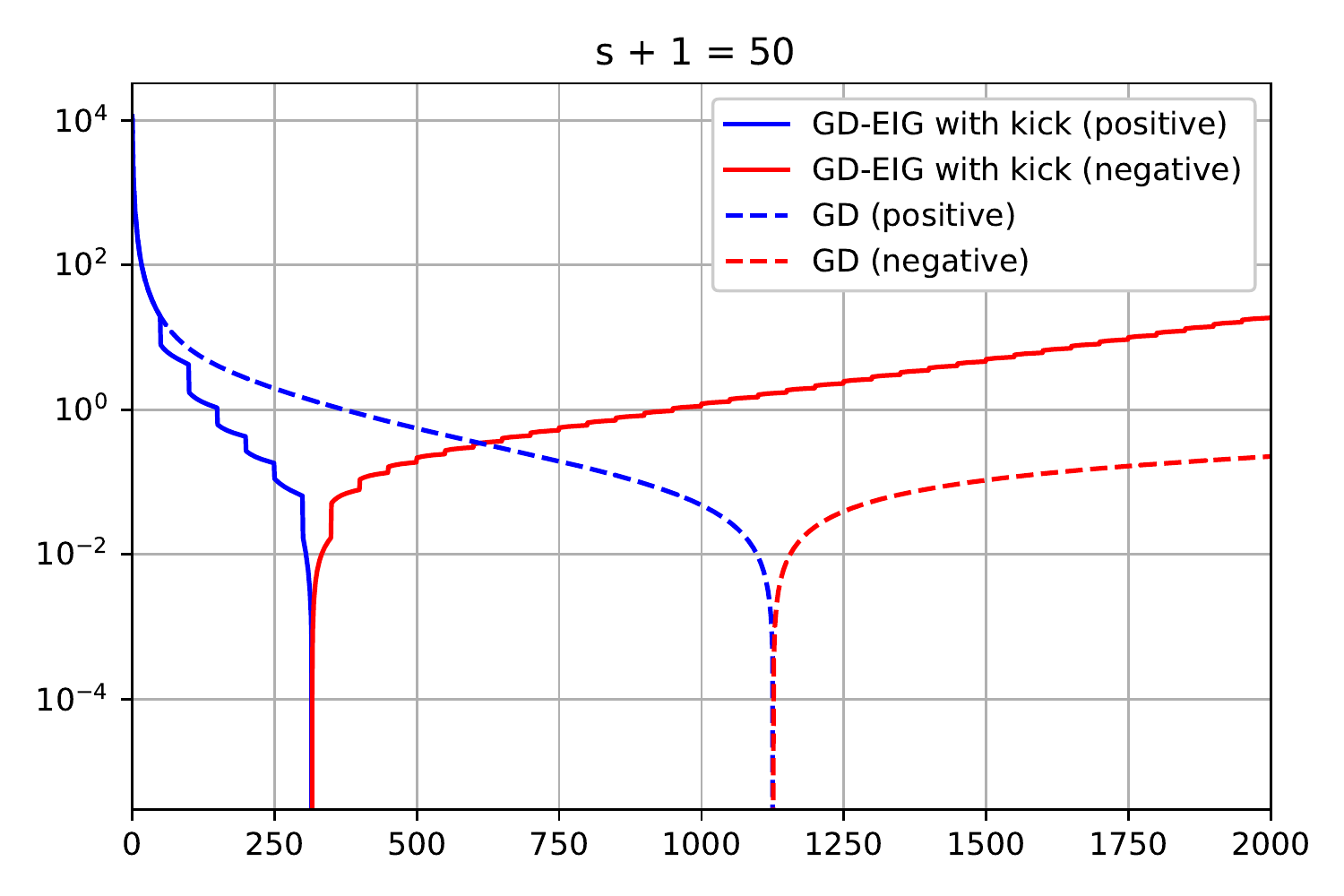}
    \includegraphics[width=0.3\textwidth]{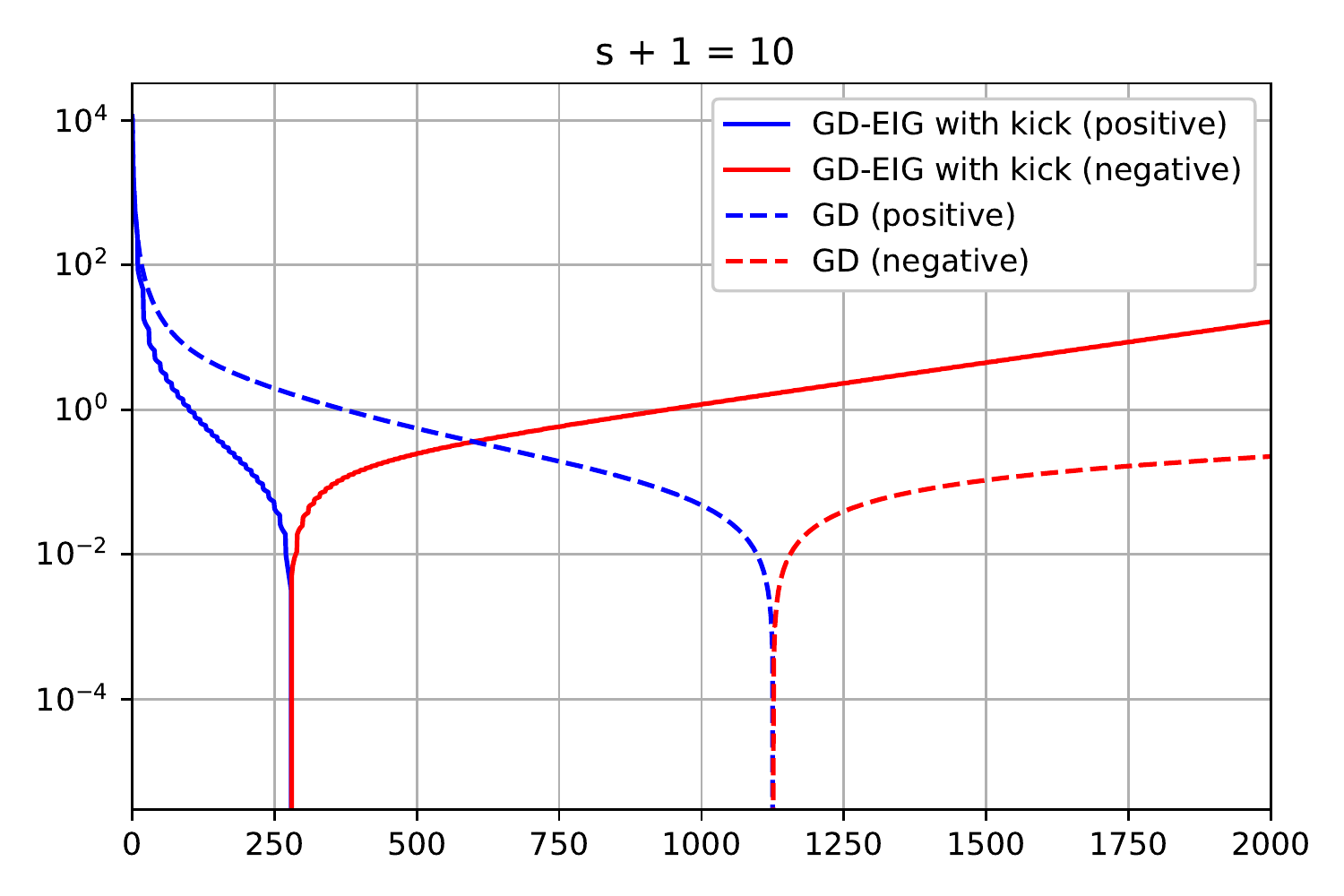}
    
    \includegraphics[width=0.3\textwidth]{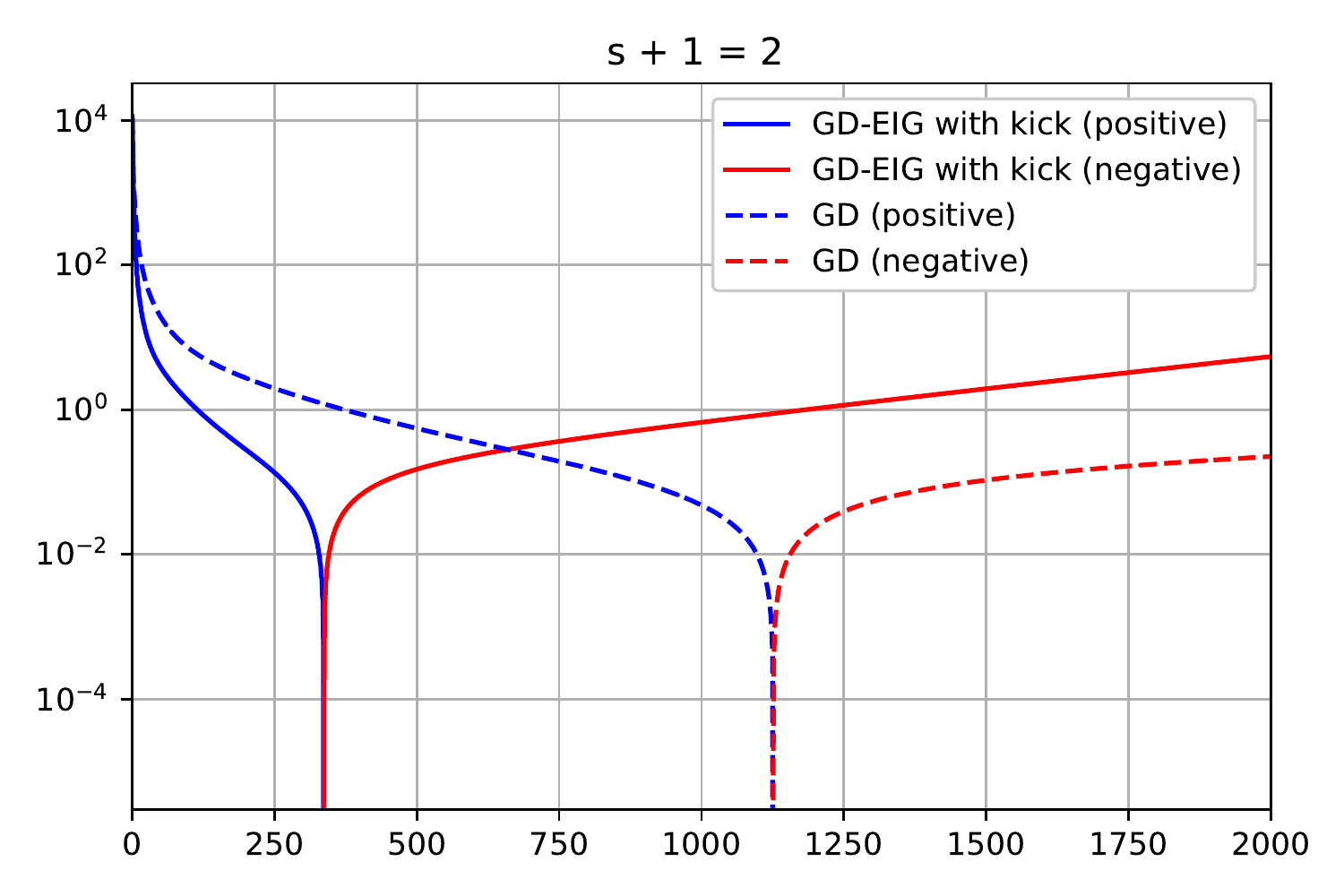}
    \includegraphics[width=0.3\textwidth]{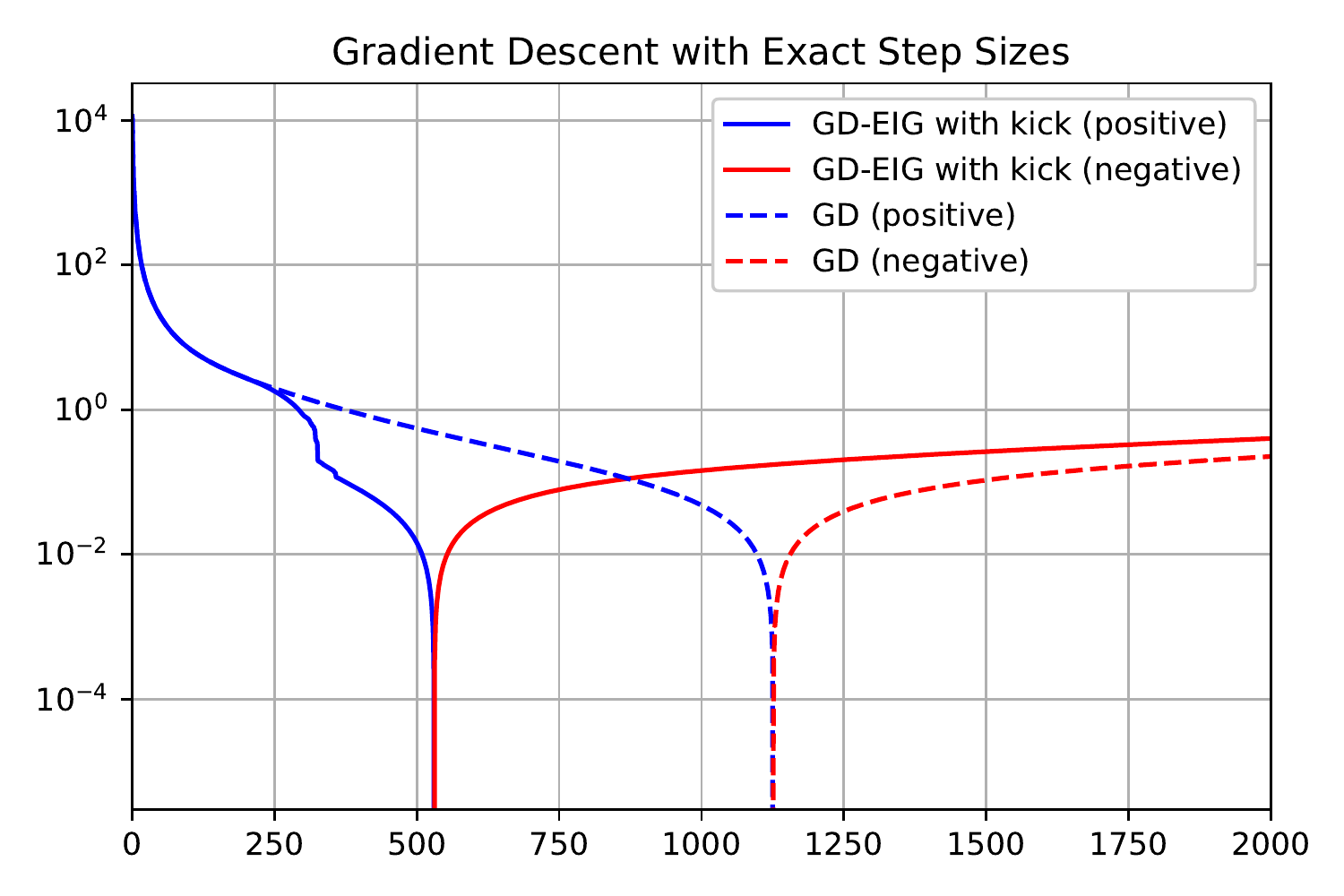}

    \caption{GD-EIG with kick.}
    \label{fig:gd-eig_withKick}
\end{figure}
Another experiment is performed to verify that GD-EIG-Kick can also be used to estimate a left eigen-pair. The experimental set up is similar to the previous one, with randomly generated $A\in \R^{200\times 200}$, $b = 0$, and $x\kz \in \R^{200}$. GD-EIG-Kick, with a fixed step size $\alpha = 1/\lambda_1$, was run for 2000 iterations, and the results are shown in Figure~\ref{fig:gd-eig_withKick}.

Figure~\ref{fig:gd-eig_withKick} shows the behaviour of GD-EIG-Kick with various choices of $s$ on the problem described above. Again, this plot distinguishes between positive and negative function values by colour: blue corresponds to $f(x\kk)\geq 0$ and red corresponds to $f(x\kk)< 0$. In each plot, the dashed line corresponds to GD with a fixed step size of $\alpha = 1/\lambda_1$ (this is fixed in all plots). For this (representative) problem instance using GD, the function value decreases rapidly initially, but then the iterates become trapped by the saddle point and the function values decrease slowly, until at approximately 1100 iterations, a direction of negative curvature is found, the function value switches to negative, and the iterates slowly escape the saddle point. 

Also shown in each plot is the behaviour of GD-EIG-Kick (solid line) with the stated $s$ value. In all cases, GD-EIG-Kick finds a direction of negative curvature more quickly than GD, and thus, escape from the saddle point is much more rapid (the slope of the red (solid) curve is slightly steeper than for GD (dashed line) in each case). This confirms that using GD-EIG-Kick is beneficial in practice. It is also clear that the choice of $s$, the number of iterations before a kick, impacts the practical performance of the algorithm. For this experiment, the best performance occurred when $s+1=10$ (9 iterations with a fixed step size, and then one kick iteration). It is also apparent that the `kick' iterations lead to a clearly visible reduction in the function values (the `staircase' like pattern in the function values).  In the plot, we also record the behaviour of Gradient/Steepest Descent with an exact step size as a benchmark ($s = 0$ i.e., always `kick'). Notice that using an exact step size is better than GD with a fixed step size, but GD-EIG-Kick is still better in all cases.

\subsection{Investigation of step sizes in GD}

The purpose of the experiment in this section is to investigate the behaviour of GD-EIG with the longer step size $\alpha = 2/\lambda_1$. A symmetric positive definite matrix $A\in \R^{1000 \times 1000}$ was generated, as well as the optimal solution $\xs \in \R^{1000}$, with $b$ computed as $b = A\xs$. A point $x^{(-1)}\in\R^{1000}$ was generated, and the starting point for the experiments was set as $x\kz = x^{(-1)}- (1/\lambda_1)(Ax^{(-1)}-b)$ (i.e., this is the `smart' initialization strategy, which ensures that the component of the gradient in the direction $v_1$ is eliminated). Figure \ref{fig:steplength} shows the results. Note that, while 100 problem instances of the type described above were generated, the results of a single \emph{representative} problem instance are reported.

The lines correspond to the step sizes $\alpha = 1/\lambda_1$ (blue), $\alpha = 2/\lambda_1$ (red), and $\alpha = 2/(\lambda_1+\lambda_n)$ (green). The solid lines correspond to $\beta = 0$ (no momentum), the dashed lines correspond to $\beta = 0.5$, and the dotted lines correspond to $\beta = 0.8$. Notice that, regardless of the choice of momentum parameter $\beta$, the red and green lines coincide, showing no discernible difference, and both choices led to improvements over the step size $\alpha = 1/\lambda_1$. This is important because, while $\lambda_1$ is often available when the algorithm begins, $\lambda_n$ is usually unknown, so the step size $\alpha = 2/\lambda_1$ is convenient in practice. These experiments also show that momentum can be helpful in terms of the practical behaviour of the method, with these experiments suggesting larger values of the momentum parameter correspond to a reduction in the number of iterations.

\begin{figure}[ht]
    \centering
    \includegraphics[width=0.3\textwidth]{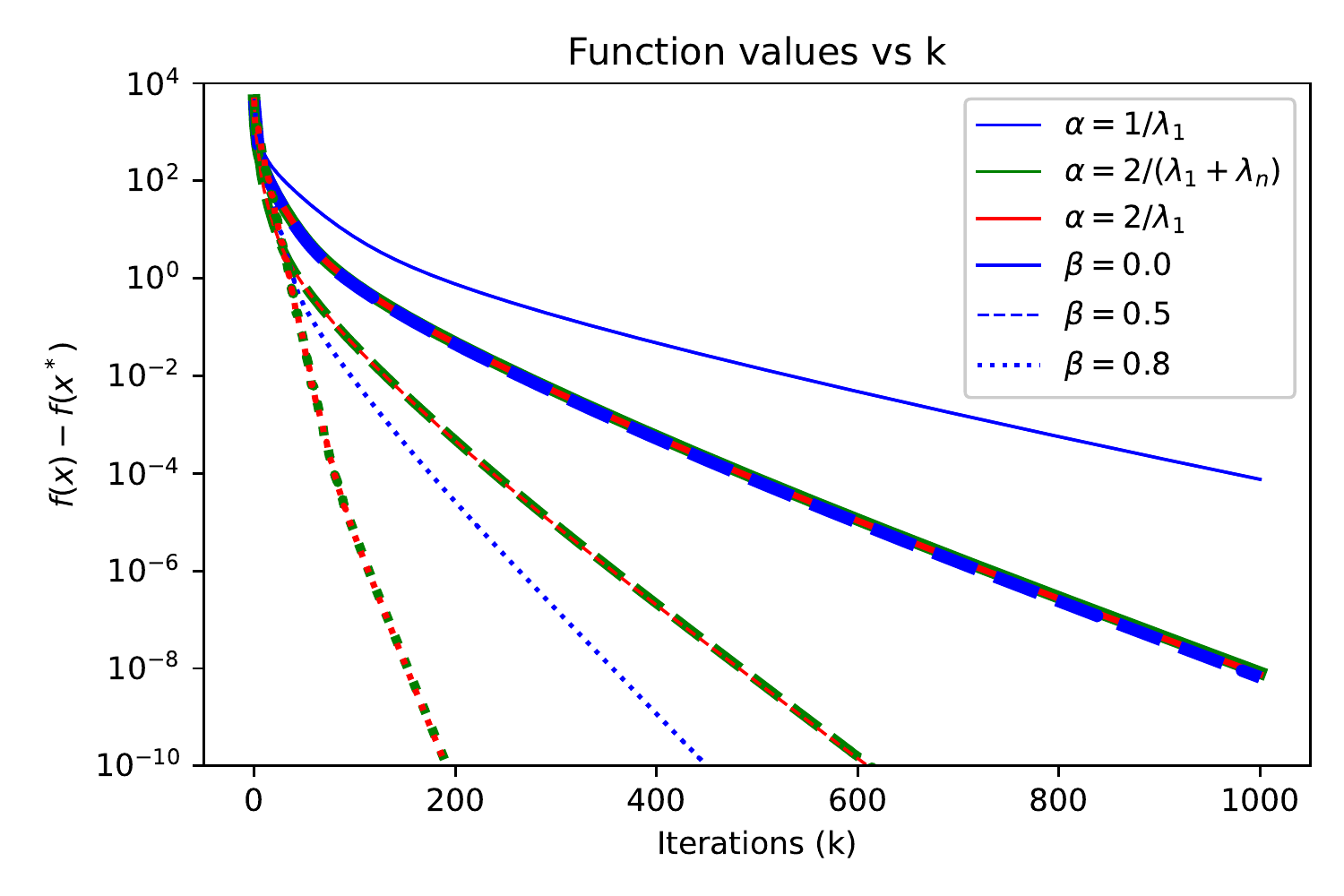}
    \includegraphics[width=0.3\textwidth]{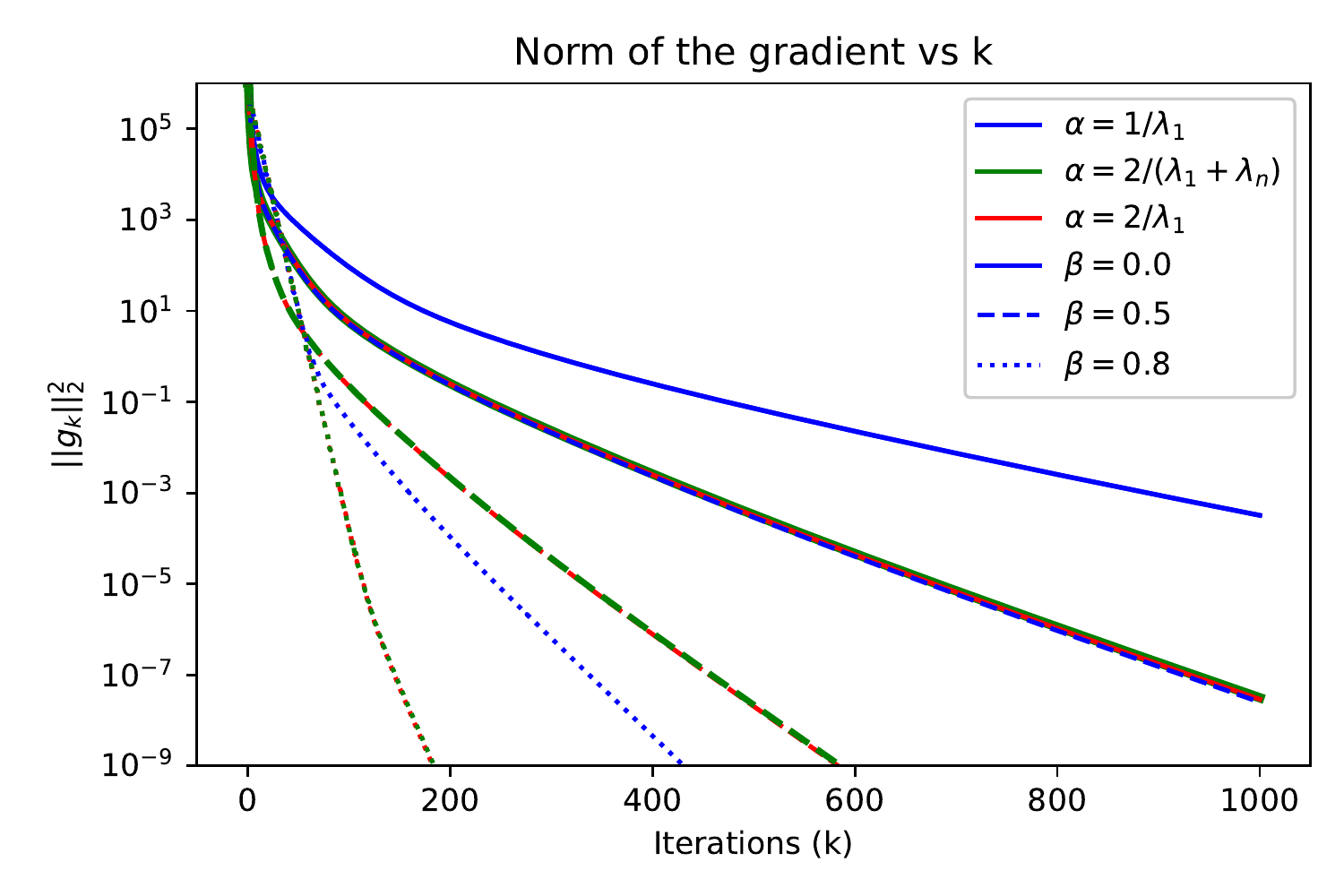}    
    \includegraphics[width=0.3\textwidth]{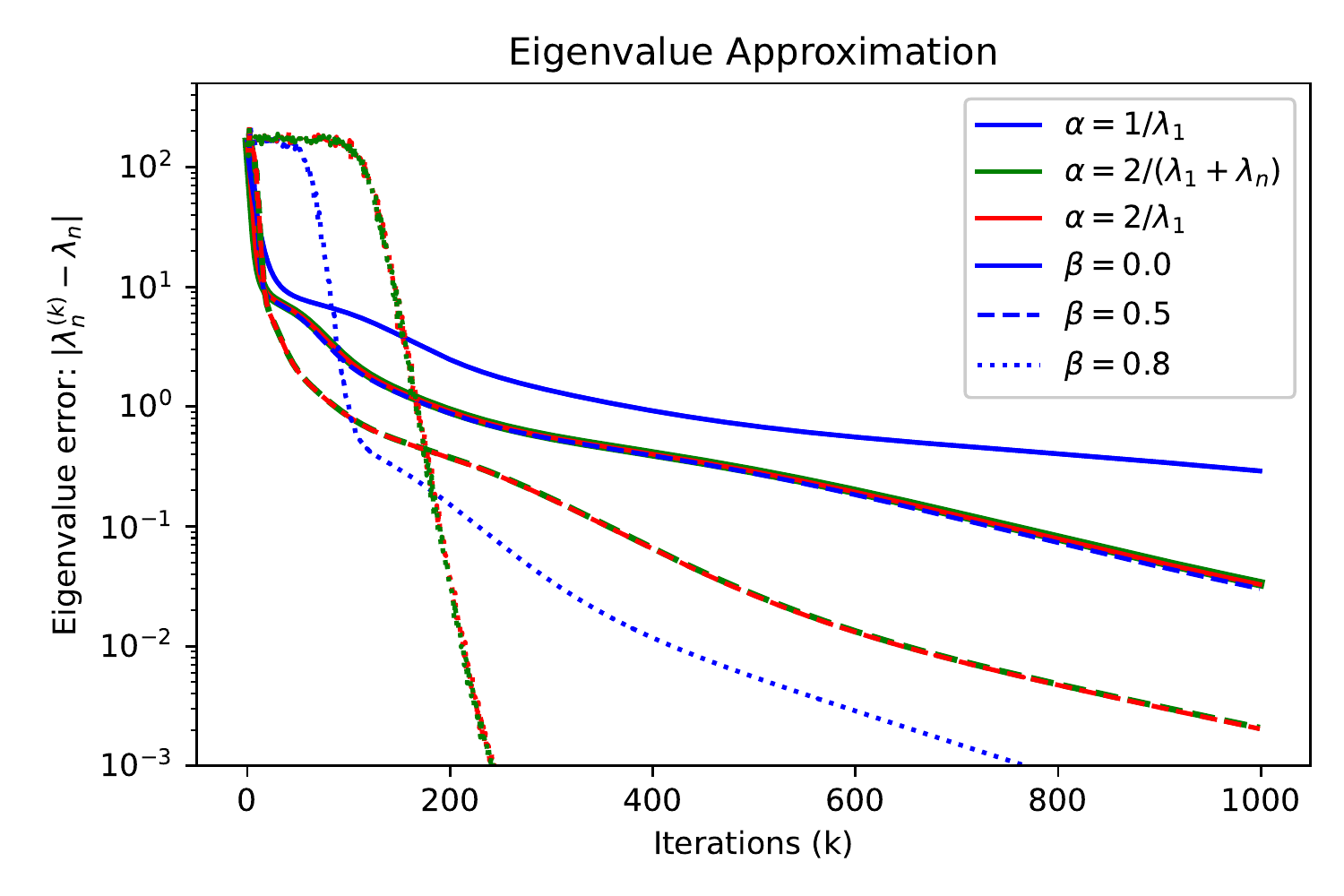}    
        
    \caption{Investigating the choice of step length in Gradient Descent.}
    \label{fig:steplength}
\end{figure}

\subsection{Investigating GD-EIG with a Kick}

In this section the performance of GD-EIG-Kick is studied. A symmetric positive definite matrix $A\in \R^{1000 \times 1000}$ was generated, as well as the optimal solution $\xs \in \R^{1000}$, with $b$ computed as $b = A\xs$. For all experiments, the step size $\alpha\kk = 2/\lambda_1$ was used, coupled with the appropriate initial point. The experiment compares GD with a fixed step size, with GD-EIG-Kick with 2 different inner loop lengths (i.e., 2 choices of inner iteration counter $s$), and the algorithms were also compared with the Accelerated Gradient Method as a benchmark. (Recall that the Accelerated Gradient Method requires knowledge of $\lambda_n>0$ in advance, which is not assumed for GD-EIG-Kick.) Figure \ref{fig:gdeigkick} shows the results of this experiment.  

The top left plot shows the evolution of the function value error $f(x\kk) - f^*$, the top middle plot shows the step size for GD-EIG-Kick, for both $s+1=20$ and $s+1 = 100$, and the top right plot shows the evolution of the gradient norm $\|g\kk\|_2^2$. Consider the top left plot. The blue line corresponds to 999 iterations of GD with the fixed step size $\alpha\kk = 2/\lambda_1$, coupled with the appropriate initial point. For the 1000th iteration, a `kick' step (using the Rayleigh quotient) was used, which corresponds to the rapid decrease in the function value error in the last iteration. GD-EIG-Kick (the red and black lines) clearly outperform vanilla GD, and the choice $s+1=20$ (19 iterations with a fixed step size, followed by a kick) is slightly better than $s+1 = 100$, suggesting that it is advantageous to take a `kick' step more often. Note that the accelerated gradient method (the green line) performs the best, but it is not drastically better than GD-EIG-Kick with $s+1 = 20$, and it also requires prior knowledge of $\lambda_n$.

Notice that for each inner loop, the first step is $\alpha^{(k,1)} = 1/\lambda_1$, the next $s-1$ iterations correspond to a step of size $\alpha^{(k,j)} = 1/\lambda_1$ for $2\leq j \leq s$, and the last $s+1$th step is $\alpha^{(k,s)} = 1/|\lambda_n\kk|$, where $\lambda_n\kk$ is the approximation to $\lambda_n$ generated by the algorithm. It is important to note that the step size $\alpha^{(k,1)} = 1/\lambda_1$ is necessary at the start of every inner loop; without it, the longer step size $2/\lambda_1$ will cause the algorithm to diverge. The top middle plot shows this pattern, and also shows that every $s+1$th step size is much larger, so the `kick' helps the algorithm to make faster progress toward the solution.

The top right plot shows the evolution of the gradient norm. For GD with the fixed step size, the curve is smooth, and decreases slowly. For GD-EIG-Kick, with either choice of $s$, the norm of the gradient decreases more rapidly. Notice that at every $s+1$th iteration, the norm of the gradient increases, corresponding to the `kick'. These kick steps also correspond to the larger decrease in the function value --- a staircase pattern --- as previously mentioned.

The experiment described above was repeated for a matrix $A$ that is only positive semi-definite (it had 50 eigenvalues that were set to zero). Similar behaviour was observed for this problem. Note that the accelerated gradient method requires the objective function to be strongly convex, so it was not employed here.
\begin{figure}[ht]
    \centering
    \includegraphics[width=0.3\textwidth]{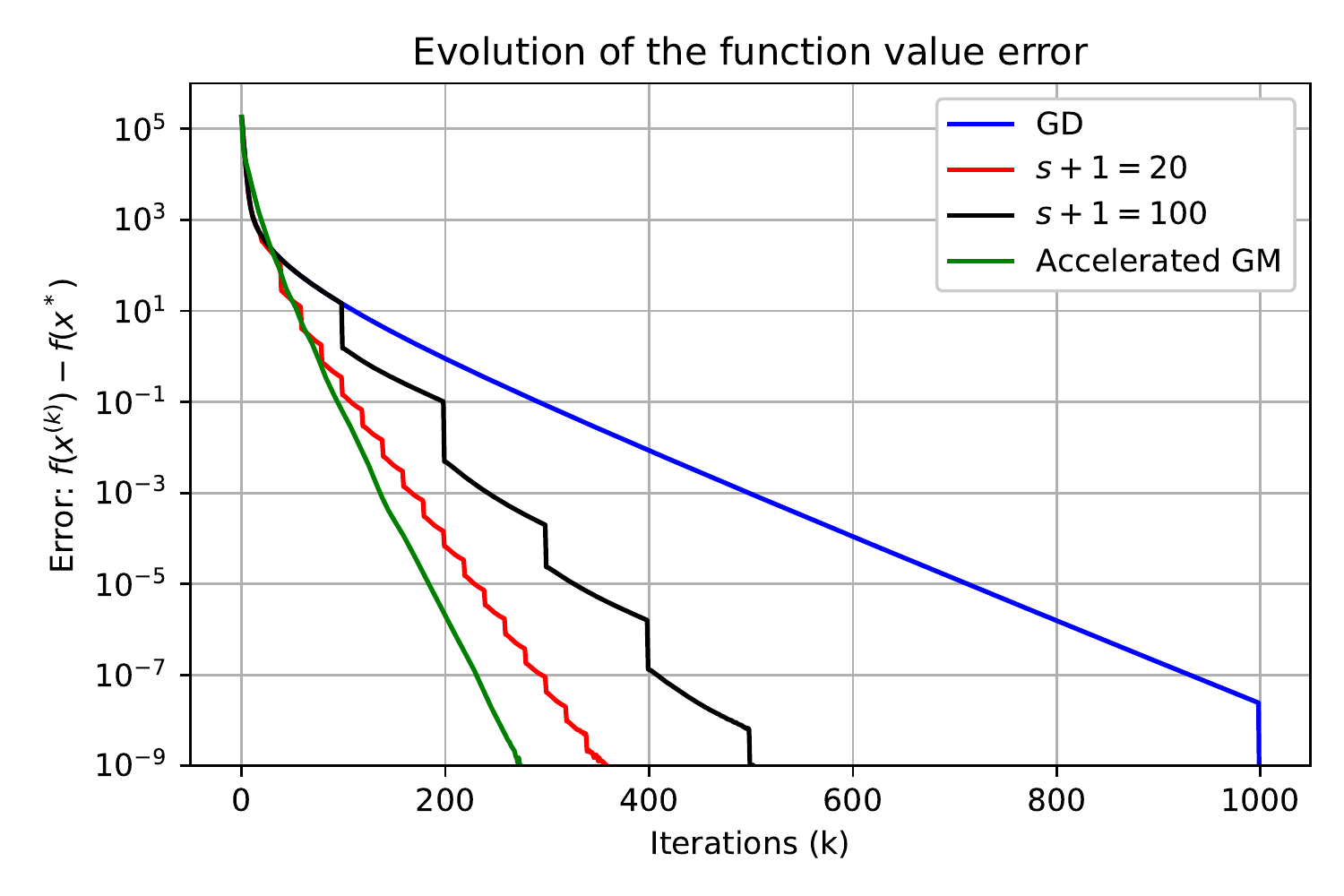}
    \includegraphics[width=0.3\textwidth]{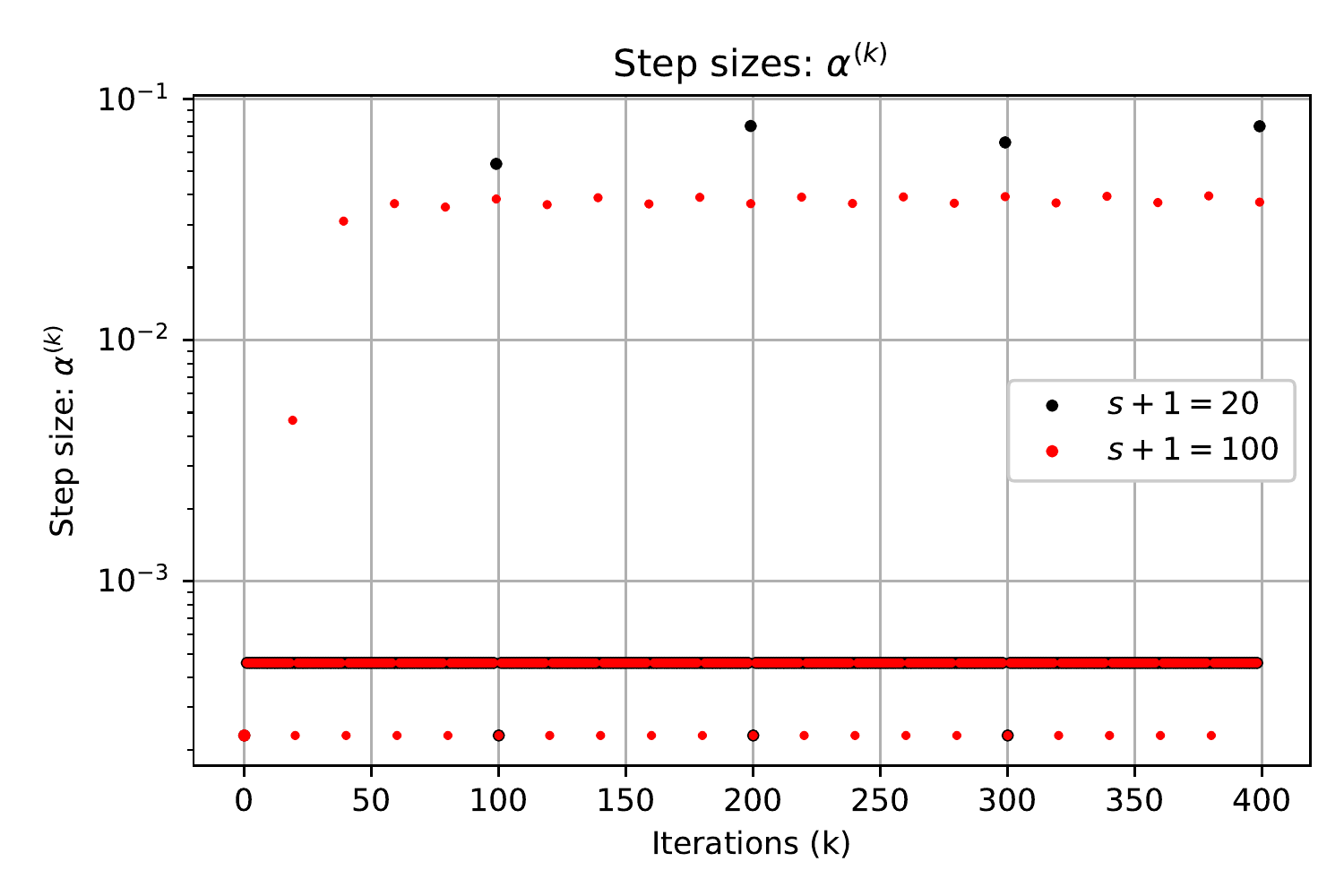}
    \includegraphics[width=0.3\textwidth]{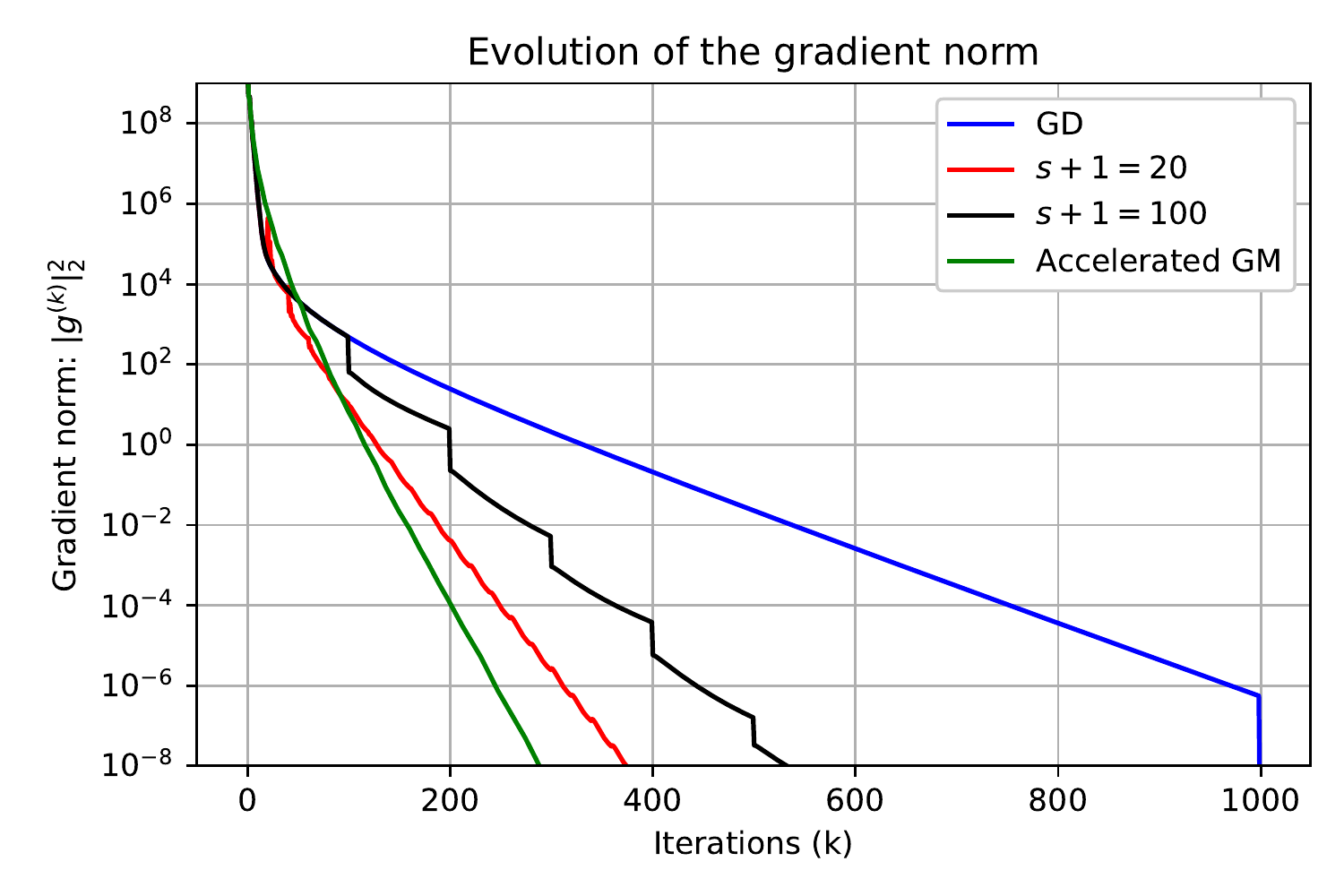}
    
    \includegraphics[width=0.3\textwidth]{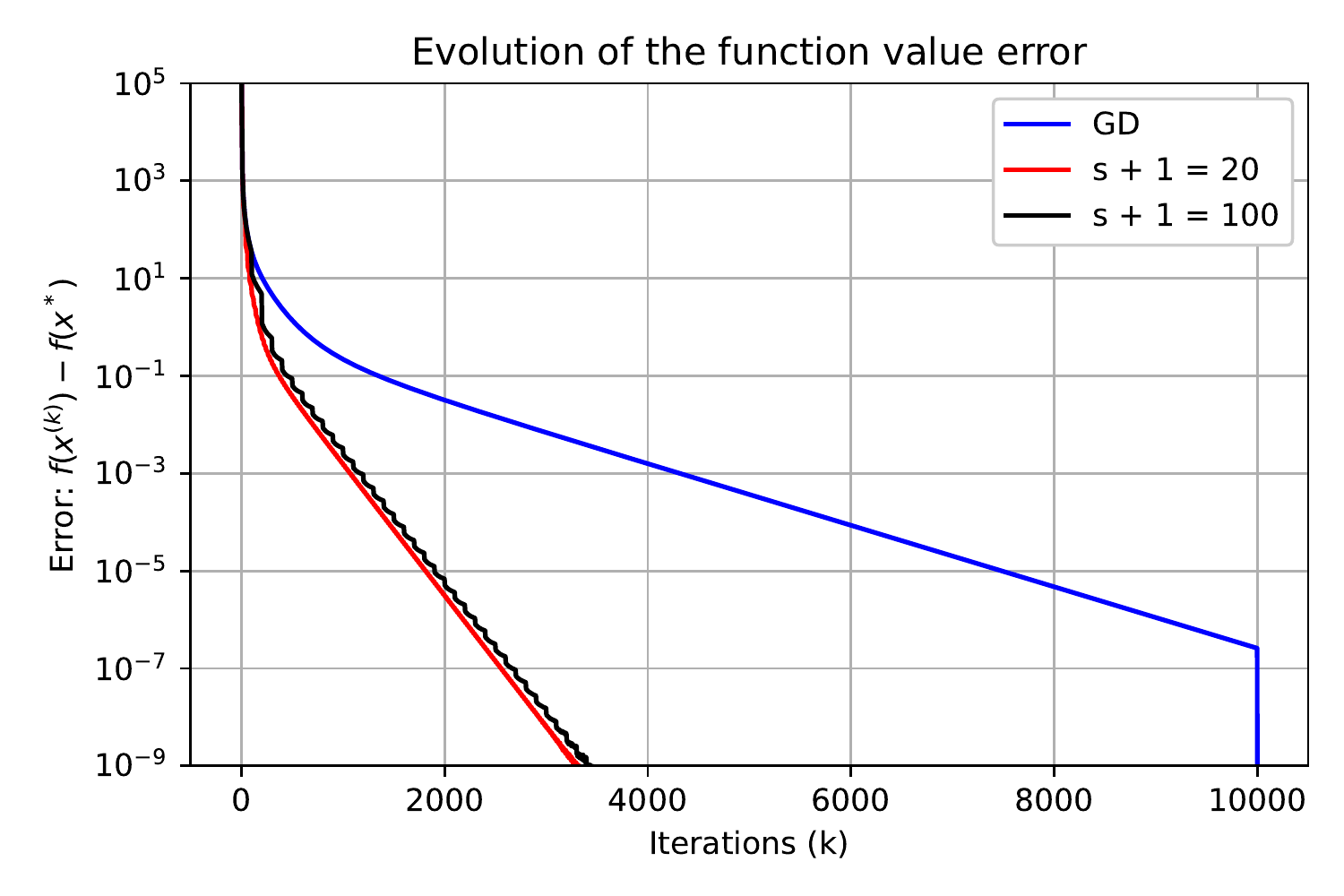}
    \includegraphics[width=0.3\textwidth]{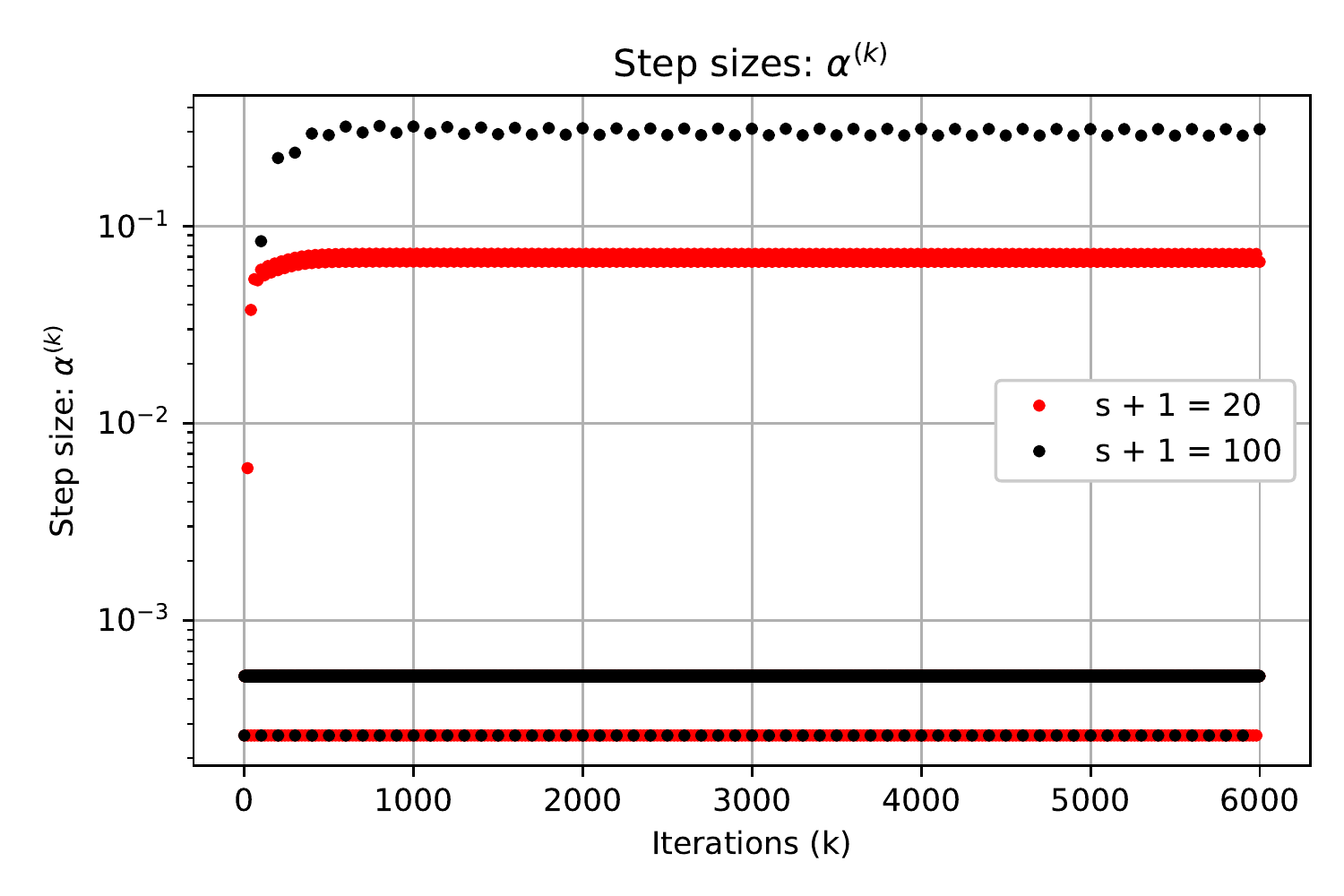}
    \includegraphics[width=0.3\textwidth]{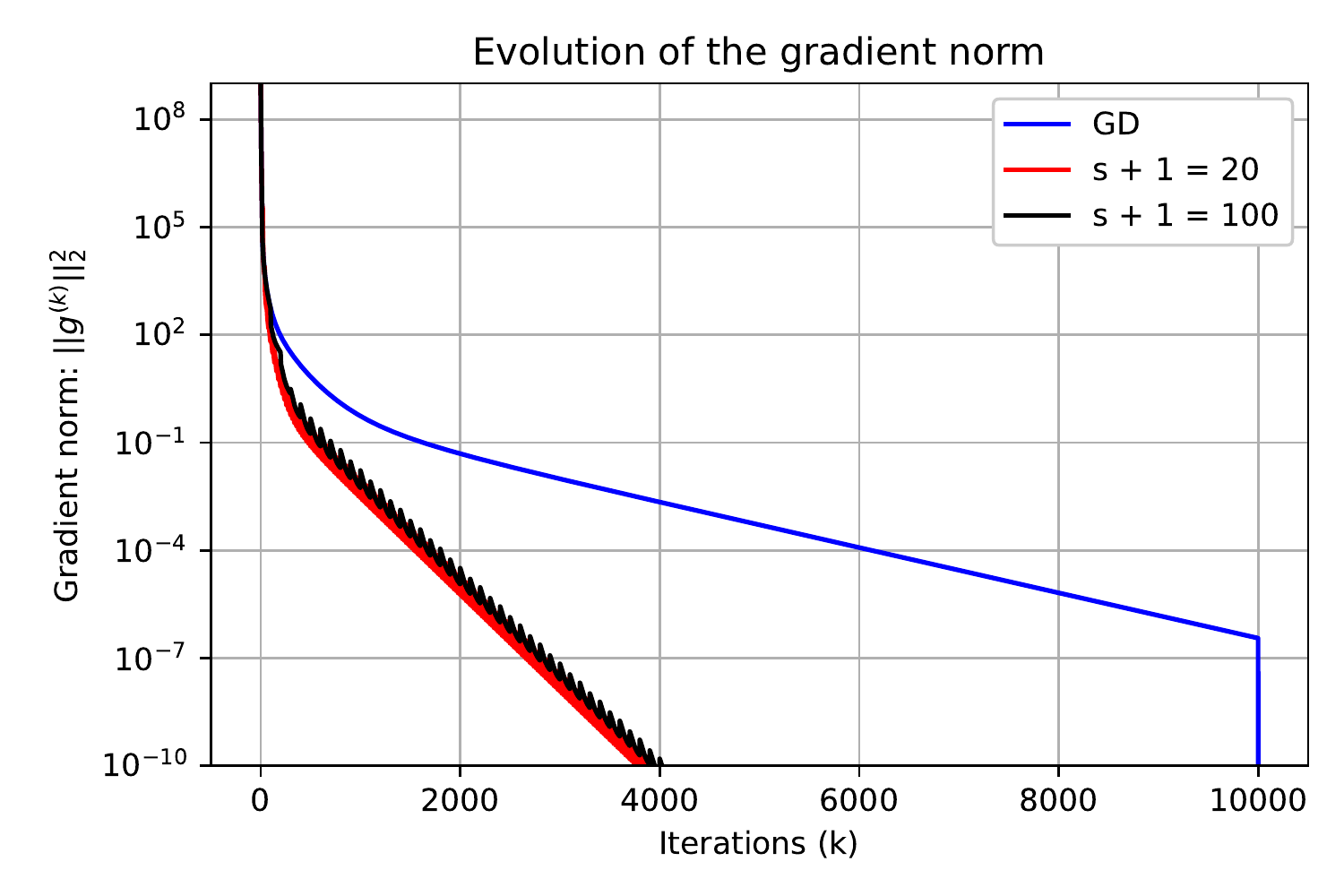}
    
    \caption{Gradient Descent with a kick. The top row corresponds to a positive definite $A$, while the bottom row corresponds to a positive semi-definite $A$.}
    \label{fig:gdeigkick}
\end{figure}

\subsection{Example from \cite{Du2017}}

The work \cite{Du2017} presents an example of a smooth non-convex function $F: \R^2 \to \R$, which is made up quadratic regions that are joined using splines. This provided motivated to investigate the behaviour of the proposed algorithms on this non-quadratic problem. (Specific problem details can be found in \cite{Du2017}.)

Figure~\ref{fig:hard2dProblem} shows the surface and contour plot of the function $F$ together with iterates of Gradient Descent (GD) run with the default step-size as suggested by authors in \cite{Du2017}. To see why this problem is hard, once can investigate the size of gradient during the trajectory $\{x^{(k)}\}_{k=0}^\infty$.

\begin{figure}
    \centering
    \includegraphics[width=0.48\textwidth]{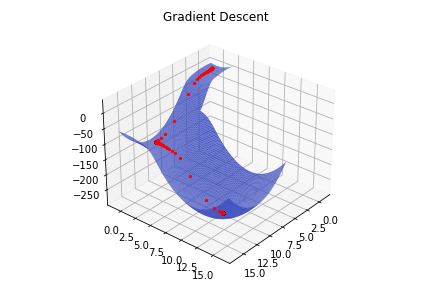}
    \includegraphics[width=0.48\textwidth]{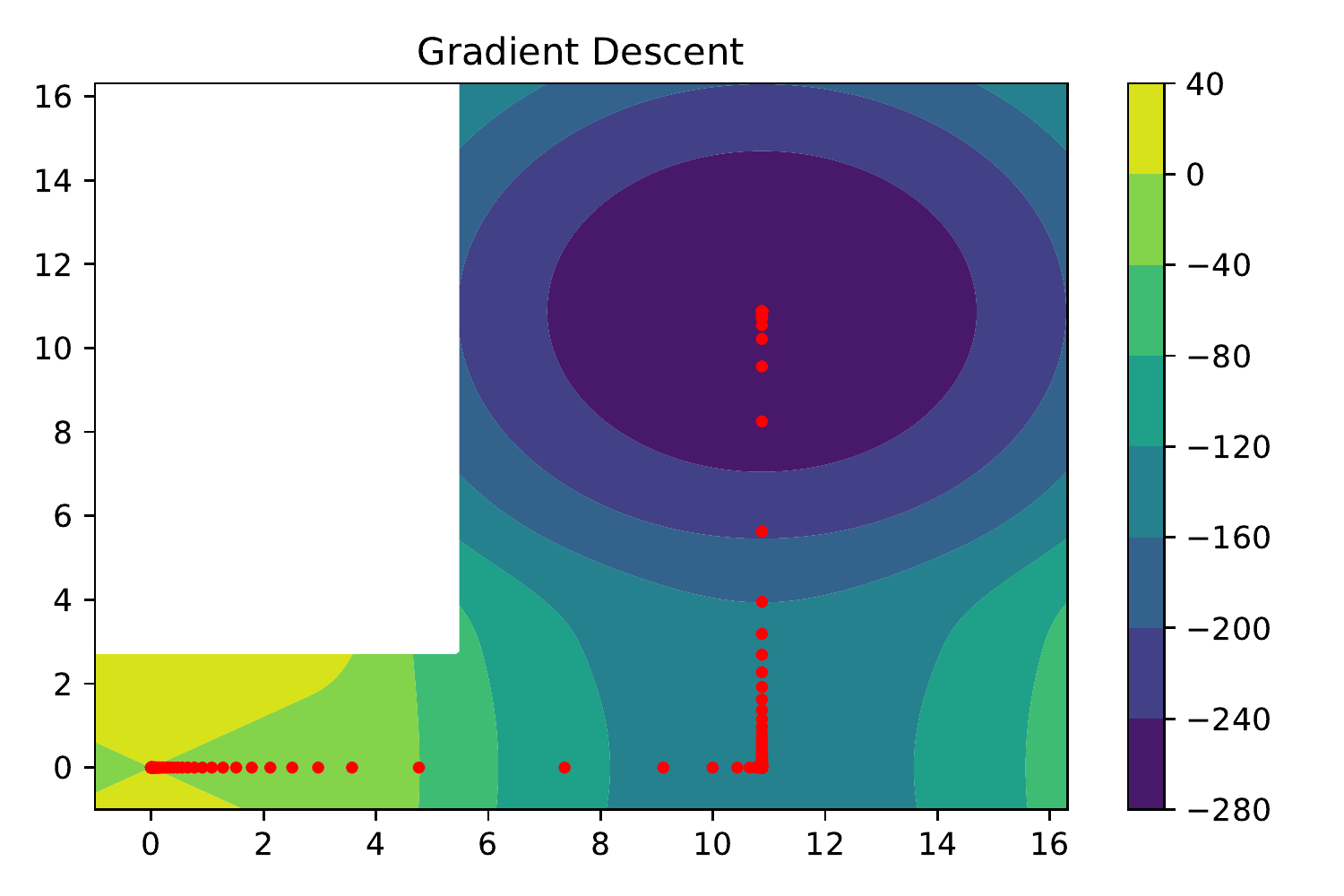}    
    \caption{Surface plot and contour plot of a function $F:\R^2 \to \R$ defined in \cite{Du2017}. We run gradient descent algorithm with a default step-size as suggested in \cite{Du2017} and initial point $x^{(0)} = (0.0001, 0.0001)^T$.}
    \label{fig:hard2dProblem}
\end{figure}

\begin{figure}
    \centering
    \includegraphics[width=0.48\textwidth]{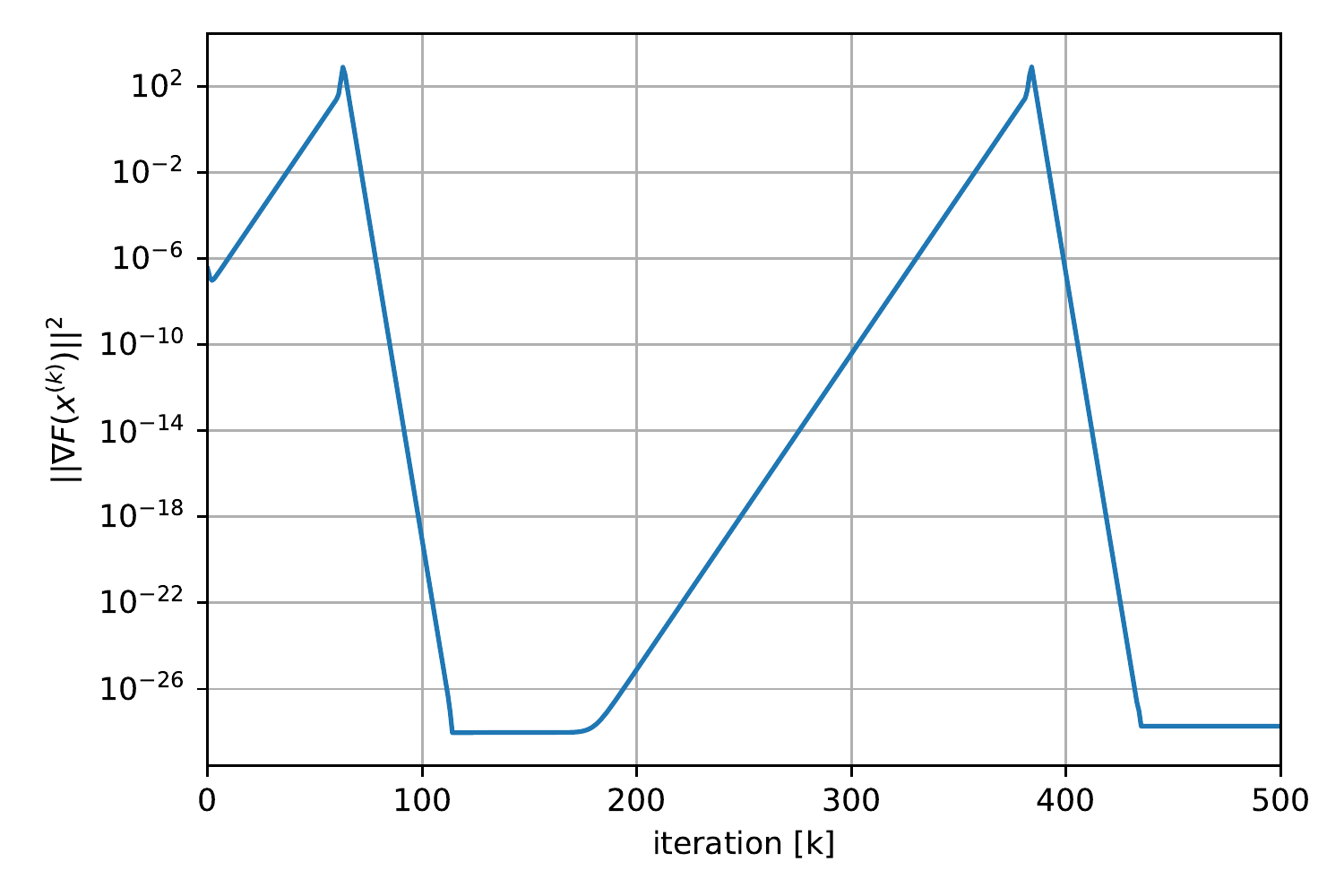}
    \includegraphics[width=0.48\textwidth]{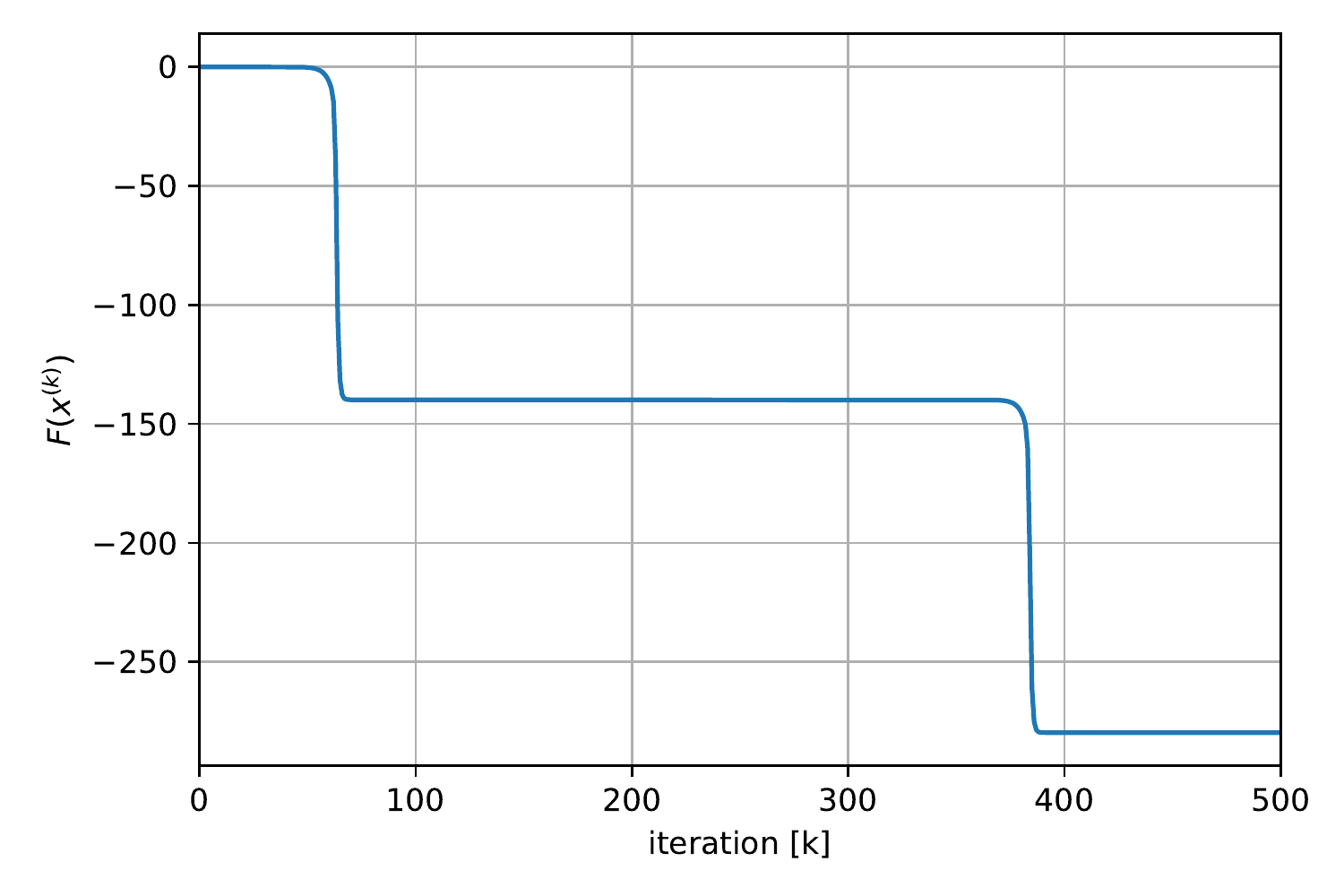}    
    \caption{Evolution of $F(x^{(k)})$ and $\| \nabla F(x^{(k)})\|^2$ of iterates produced by GD algorithm.}
    \label{fig:hard2dProblem_evol}
\end{figure}
In Figure~\ref{fig:hard2dProblem_evol} one observes that at the initial starting point, the norm of the gradient is already very small, and it takes almost 80 iterations to make a significant improvement in the function value $F(\cdot)$. Then, however, the iterates gets trapped close to a saddle point around $(4\cdot e, 0)^T$, and the size of squared gradient becomes close to $10^{-28}$. To escape the saddle point, one suggested approach is to use Perturbed GD (PGD) \cite{jin2017escape}, which perturbs the current iterate in the case when the size of gradient is below some threshold for a specified number of iterations (see \cite{jin2017escape} for details). 
An alternative approach that is investigated now is to utilize GD-EIG-Kick (Algorithm~\ref{alg:GDKick}).
\begin{figure}
    \centering
    \includegraphics[width=0.48\textwidth]{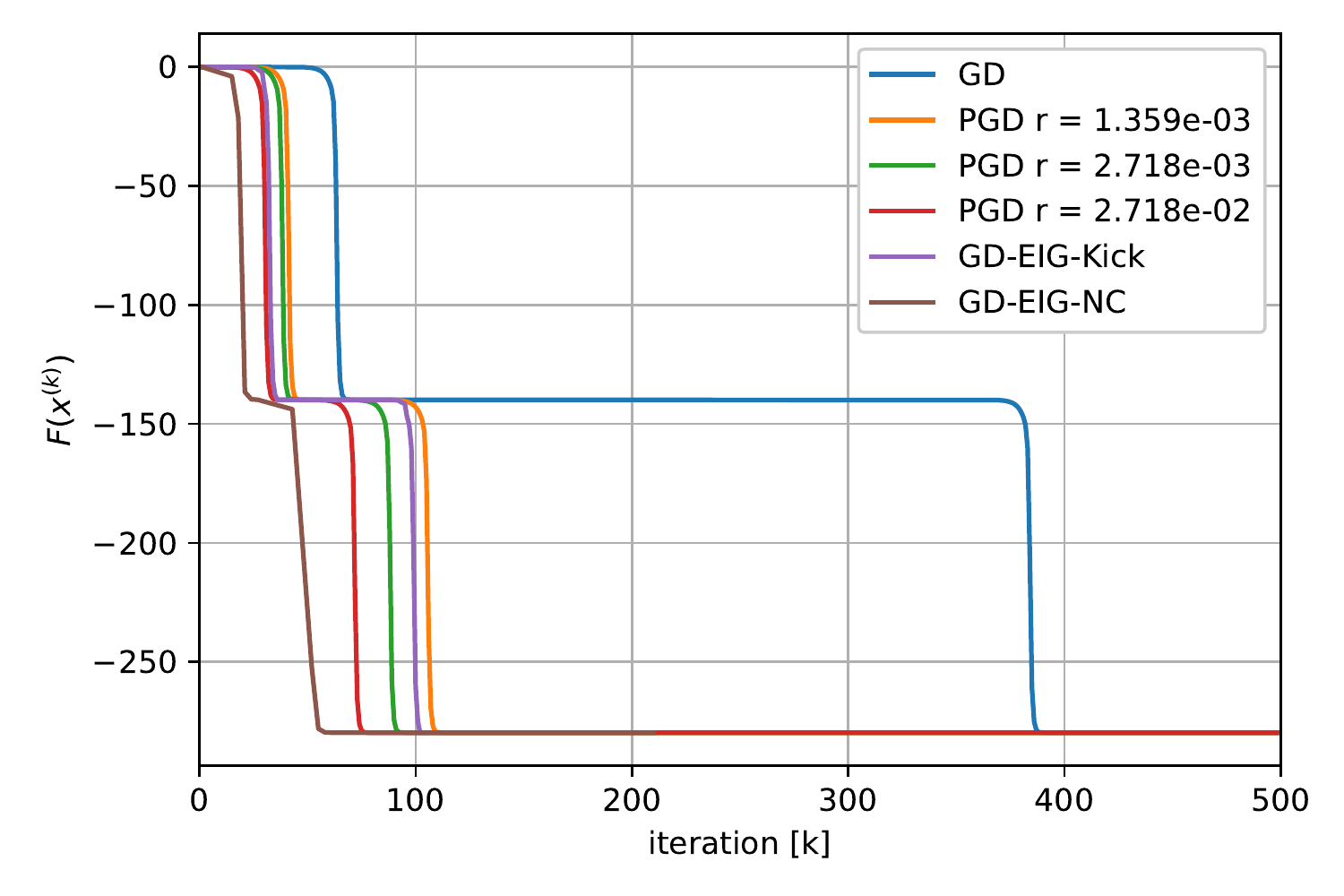}
    \includegraphics[width=0.48\textwidth]{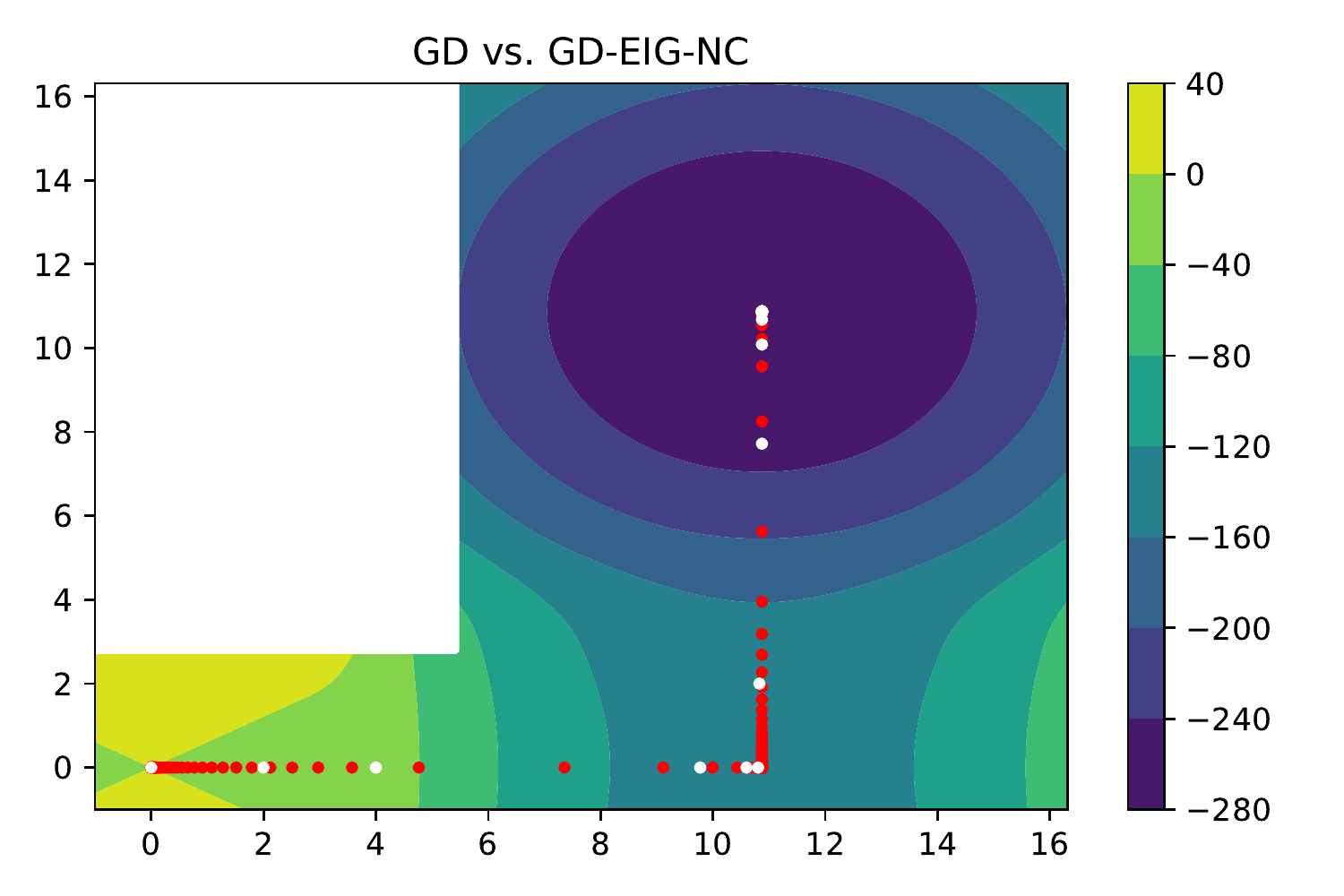}
    \caption{Left: Comparison of GD with PGD for various setting, together with GD-EIG-Kick. Note that for GD-EIG-Kick we count on x-axis the cost, not the iterations itself, the reason is that we need to evaluate multiple hessian-vector products when minimizing the model $m_k(\Delta)$.
    Right: Comparison of GD with GD-EIG-Kick. Note that the GD-EIG-Kick needs only a few iterations. 
    }
    \label{fig:hard2dProblemCOmparison}
\end{figure}
In Figure~\ref{fig:hard2dProblemCOmparison} we compare the evolution of GD, and PGD with various perturbations\footnote{The noise that is added to the current iteration is sampled from sphere with radius $r$. The choice of $r=2.718\cdot10^{-2}$ is a setting used by \cite{Du2017}.}
Note that the performance of PGD is sensitive on the choice of hyper-parameters. On the other hand, GD-EIG-Kick does not require tuning and we simply run it with $s=2$; the performance is comparable to PGD.

We also used GD-EIG-Kick to minimize the quadratic model 
$m_k(\Delta) = F(x_k)+\langle \nabla F(x_k), \Delta\rangle
 + \frac12 \Delta^T \nabla^2 F(x_k) \Delta
 $
that fits the structure of \eqref{probmin}
with $A = \nabla^2 F(x_k)$ and $b = -\nabla F(x_k)$.
If negative curvature is discovered, we would go in that direction, otherwise we would take step towards the approximate solution $\bar \Delta \approx \min_{\Delta} m_k(\Delta)$ that was obtained by GD-EID. We denote this on Figure~\ref{fig:hard2dProblemCOmparison} as GD-EIG-NC. These numerical results support the use of GD-EIG-Kick to help escape saddle points.

\section{Conclusion}
\label{Section_Conclusion}

This work formalized the connection between GD with a fixed step size and the PM, both with and without fixed momentum, when applied to a quadratic function. Thus, GD implicitly provides an approximation to the leftmost eigen-pair of the Hessian.

Several examples from recent literature show that GD with a fixed step size takes exponential time to escape saddle points. These examples were re-visited, to show that if `freely available' eigeninformation was used, the performance of GD with an adaptive step size may be better than previously suggested. In particular, it may be possible to use an adaptive step length based on the estimate of the leftmost eigenvalue to escape saddle points more quickly than if a fixed step size is used. 

The dynamics of the gradient were presented, and its was explained that a step size of $2/\lambda_1$ can be used for GD, as long as the algorithm is initalized carefully. The special case $\R^2$ was discussed, and it was shown that if $\lambda_1$ is known, then a full eigen-decomposition of $A$ is available after 2 iterations of GD with the fixed step size $\alpha = 1/\lambda_1$.

A new algorithm called GD-EIG-Kick was presented, which uses the approximate eigenvalue information to attempt a long step, a `kick', every $s$ iterations. GD-EIG-Kick is guaranteed to converge when $A$ is PD, and it behaves better in  practice than vanilla GD.

Numerical experiments confirmed that GD does provide an estimate of the leftmost eigen-pair in practice, although many iterations are needed for an accurate approximation. Examples were also presented that show the benefits of GD-EIG-Kick compared with vanilla GD,  which supports the view that approximate eigen (curvature) information should be used within GD algorithms where possible.

\subsection{Future work}\label{SectionNonconvexGDEIG}

The work in this paper relies on the fact that the objective function is quadratic. It is more difficult to say whether the ideas may generalize, and investigating this is outside the scope of the current work. However, future work may investigate a Newton-GD type algorithm (akin to the Newton-CG algorithm), where, at a given iterate, a possibly nonconvex quadratic model is built, and GD-EIG is used to find a search direction (possibly of negative curvature).

\bibliographystyle{plain}
\bibliography{refs}

\end{document}